\numberwithin{equation}{section}
\newtheorem{lem}{Lemma}[section]
\newtheorem{prop}{Proposition}[section]
\newtheorem{thm}{Theorem}[section]
\newtheorem{conj}{Conjecture}[section]
\newtheorem{rem}{Remark}[section]
\DeclareMathOperator{\C}{\mathbb{C}}
\DeclareMathOperator{\E}{\mathbb{E}}
\DeclareMathOperator{\R}{\mathbb{R}}
\DeclareMathOperator{\Q}{\mathbb{Q}}
\DeclareMathOperator{\Z}{\mathbb{Z}}
\DeclareMathOperator{\N}{\mathbb{N}}
\DeclareMathOperator{\F}{\mathbb{F}}
\DeclareMathOperator{\e}{e}
\DeclareMathOperator{\one}{\mathds{1}}
\let\oldsquare\square
\renewcommand{\square}[1][-1pt]{%
 {\raisebox{#1}{$\oldsquare$}}%
}
\newcommand{\Mod}[1]{\ (\text{mod}\ #1)}
\title{Random Chowla's Conjecture for Rademacher Multiplicative Functions}
\author{Jake Chinis and Besfort Shala} 
\begin{document}

\begin{abstract}
We study the distribution of partial sums of Rademacher random multiplicative functions $(f(n))_n$ evaluated at polynomial arguments. We show that for a polynomial $P\in \Z[x]$ that is a product of at least two distinct linear factors or an irreducible quadratic satisfying a natural condition, there exists a constant $\kappa_P>0$ such that
    \[
    \frac{1}{\sqrt{\kappa_P N}}\sum_{n\leq N}f(P(n))\xrightarrow{d}\mathcal{N}(0,1),
    \]
    as $N\rightarrow\infty$, where convergence is in distribution to a standard (real) Gaussian. This confirms a conjecture of Najnudel and addresses a question of Klurman-Shkredov-Xu.

We also study large fluctuations of $\sum_{n\leq N}f(n^2+1)$ and show that there almost surely exist arbitrarily large values of $N$ such that
\[
\Big|\sum_{n\leq N}f(n^2+1)\Big|\gg \sqrt{N \log\log N}.
\]
This matches the bound one expects from the law of iterated logarithm.
\end{abstract}

\maketitle

\section{Introduction}

The study of mean values of multiplicative functions has played a central role in analytic number theory for hundreds of years. 
At the heart of the subject lies the M{\"o}bius function, $\mu$, which is the multiplicative function supported on squarefree (natural) numbers and defined to be $-1$ on the primes. The associated generating series is $\sum_n\mu(n)/n^s$, 
which is absolutely convergent for $\Re(s)>1$, and is equal to $1/\zeta(s)$, where $\zeta$ denotes the Riemann zeta-function. It is this connection with the Riemann zeta-function which makes the study of partial sums of $\mu$ so intriguing; indeed, 
the Riemann Hypothesis is equivalent to the estimate
\begin{align}
\label{RH_sqrt}
\sum_{n\leq x}\mu(n)\ll_\varepsilon x^{1/2+\varepsilon},
\end{align}
for any $\varepsilon>0$, and all $x\geq 2$. The above estimate is often framed in terms of the "pseudo-randomness" of the M{\"o}bius function (e.g., \cite[p. 338]{IwaKow}). This paper aims to further investigate this pseudo-random behaviour through the study of so-called \textit{random multiplicative functions}.


\subsection{Random Multiplicative Functions}

A naive heuristic which one can use in order to obtain an intuitive understanding of why (\ref{RH_sqrt}) should be true is to model the M{\"o}bius function by a sequence of i.i.d.\;random variables $(X_n)_{\text{$n$:SF}}$ (indexed by the set of squarefree integers) and taking the values $\pm 1$ with equal probability. If this were the case, then the partial sums $\sum_{n\leq x}\mu(n)$ would mimic a random walk with mean zero and variance equal to number of squarefree integers up to $x$ (which is $\sim \frac{x}{\zeta(2)}$); in particular, we would expect that these partial sums fall within one standard deviation with high probability, so that (\ref{RH_sqrt}) should hold. 

In \cite{Levy}, L{\'e}vy objects to the above model, as the sequence of random variables $(X_n)_{\text{$n$:SF}}$ lacks the multiplicative structure inherent in $(\mu(n))_n$. In an attempt to rectify this, Wintner \cite{Wintner} introduces the concept of {random multiplicative functions} (RMFs), which we now describe.

Let $(f(p))_p$ denote a sequence of i.i.d.\;random variables indexed by the primes and taking the values $\pm 1$ with equal probability. A \textit{Rademacher} random multiplicative function is a sequence of random variables $(f(n))_n$ defined multiplicatively by
\begin{align*}
f(n)
:=
\begin{cases} 
      \prod_{p|n}f(p) & \text{if $n$ is squarefree,} \\
      0 & \text{otherwise}.
   \end{cases}
\end{align*}
Note that $f(n)$ is a random model for $\mu(n),$ as $\mu(n)$ is simply a specific realization of $f(n)$. Using the theory of Dirichlet series, Wintner \cite{Wintner} showed that $\sum_n f(n)n^{-s}$ is almost always convergent for $\Re(s)>1/2.$ He further showed that, for all $\varepsilon>0$, both $\sum_{n\leq x} f(n)= O_\varepsilon( x^{1/2+\varepsilon})$ and $\sum_{n\leq x} f(n)\neq O_\varepsilon (x^{1/2-\epsilon})$ hold almost always, leading to the egregious statement that "the Riemann Hypothesis is almost surely true".

The study of RMFs has flourished in recent years, most notably with work of Harper (e.g., \cite{Harper_RMF_1,Harper_RMF_2,Harper_Fluctuations, Harper_Dirichlet_SRC,Harper-Maks_Helson}). In particular, there are instances where one is able to bridge the gap between this idealized probabilistic realm and the deterministic setting, at least in the case of \textit{Steinhaus} RMFs. 

A \emph{Steinhaus} random multiplicative function is defined in a similar way as a Rademacher RMF, but with $f(p)$ distributed uniformly on the unit circle and with $f(n)$ being completely multiplicative. For instance, these RMFs are meant to model the Archimedian characters $n^{it}$ ($t$ real). In \cite[Theorem 2]{Harper_Dirichlet_SRC}, Harper shows that low moments of the Dirichlet polynomials $\sum_{n\leq N} n^{it}$ show better than squareroot cancellation on average over $t$, which is a corollary of the analogous statement for Steinhaus RMFs (via the so-called "Bohr correspondence").

As mentioned earlier, the purpose of this paper is to further investigate the relationship between random multiplicative functions and number-theoretic questions of interest; in particular, we will study the RMF analogue of the celebrated Chowla conjecture.

\subsection{The Random Chowla Conjecture}

Chowla's conjecture concerns the autocorrelations of the M\"obius function among linear forms.

\begin{conj}[Chowla's Conjecture \cite{ChowlaConj}]
    For any positive integers $a_1, b_1, \ldots, a_k, b_k$ such that $a_ib_j - a_jb_i\neq 0$, for all $i\neq j$, we have
    \[
    \sum_{n\leq N}\mu(a_1n + b_1)\cdots\mu(a_kn+b_k)=o(N),
    \]
    as $N\rightarrow\infty$.
\end{conj}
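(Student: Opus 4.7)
The plan is to follow the three-stage architecture that has succeeded for the logarithmic Chowla conjecture and then to push to remove the logarithmic weighting. The nondegeneracy hypothesis $a_ib_j-a_jb_i\neq 0$ ensures that the affine forms $a_in+b_i$ are pairwise nonproportional, which rules out trivial coincidences between factors and underwrites every step of the argument below.

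The core reduction is via the Matom\"aki--Radziwi\l\l{} theorem on averages of multiplicative functions in short intervals, which replaces $\sum_{n\leq N}\prod_i\mu(a_in+b_i)$ by sums of length $H=H(N)\to\infty$ arbitrarily slowly, with only an $o(1)$ error on average over the starting point $X\in[N,2N]$. Bounding the short-interval correlation $\sum_{n\in[X,X+H]}\prod_i\mu(a_in+b_i)$ then reduces to showing that the factors decorrelate on scale $H$, for which the natural input is Fourier uniformity of $\mu$ (Davenport for $k=2$, and the Green--Tao--Ziegler inverse Gowers theorem for $k\geq 3$).

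To break any residual structure I would run Tao's entropy decrement argument. The Shannon entropy of $\mu$ on short intervals, projected modulo a prime $p$, cannot decrease across many $p\leq P$, so for most such $p$ one can pull out multiplicativity via $\mu(pm)=-\mu(m)$ on $\gcd(m,p)=1$ and average in $p$ to produce cancellation between the factors $\mu(a_in+b_i)$. Executed at the logarithmic scale this reproduces the Tao--Ter\"av\"ainen-type bounds. To reach natural density I would aim to combine this with a pointwise-in-$N$ refinement of Matom\"aki--Radziwi\l\l{} and a short-interval Gowers uniformity statement valid at \emph{every} scale, and then splice the two by a dyadic decomposition $N\in[2^j,2^{j+1}]$ in which the entropy increments are not averaged away.

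The main obstacle is precisely this last step. The entropy decrement is intrinsically averaged across a dyadic window of $\log\log P$ scales, which is why it delivers logarithmic (and not Ces\`aro) means. To rule out a single scale $N$ at which the unweighted correlation survives while its logarithmic neighbors cancel, one needs either a strengthened Matom\"aki--Radziwi\l\l{} in which the exceptional set of starting points $X$ is quantitatively controlled for every $N$, or an inverse Gowers theorem for $\mu$ on short intervals without any outer averaging parameter. Either would constitute a genuine analytic number-theoretic breakthrough, and I view this scale-uniformity as the pivotal difficulty on which the whole proof hinges.
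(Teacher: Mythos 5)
The statement you were asked to prove is stated in the paper as a \emph{conjecture}, not a theorem: the paper explicitly says that ``Chowla's conjecture is far from being resolved,'' and the paper supplies no proof of it. The natural-density two-point (and higher) Chowla conjecture for the M\"obius function is, at the time of writing, a genuinely open problem. The only unconditional progress is for logarithmically-weighted correlations (Tao for $k=2$, Tao--Ter\"av\"ainen and Helfgott--Radziwi\l\l{} for odd and certain higher $k$), or for correlations averaged over the shift parameters (Matom\"aki--Radziwi\l\l--Tao). No one has removed the logarithmic weighting for a single pair of shifts, and your proposal does not do so either.

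To your credit, you say this yourself: your final paragraph correctly isolates the scale-uniformity obstruction (the entropy decrement argument is intrinsically averaged over a dyadic window, so it delivers logarithmic means, and splicing in a scale-by-scale version of Matom\"aki--Radziwi\l\l{} or short-interval Gowers uniformity is precisely the missing breakthrough). So what you have written is an accurate survey of the known machinery plus an honest admission that the decisive step is absent. As a \emph{proof} of the stated conjecture it has a gap exactly where you say it does, and that gap is not a technical lacuna in your write-up — it is the reason the conjecture remains a conjecture. Since the paper does not attempt a proof either, there is no ``paper's own proof'' to compare against; the paper only proves a probabilistic analogue (Theorems \ref{Steinhaus-RC} and \ref{main_thm}) where the M\"obius function is replaced by a random multiplicative function and the moment computations become tractable.
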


Although Chowla's conjecture is far from being resolved, there has been much progress in recent years (e.g., \cite{Log_Chowla_1,Log_Chowla_3,Log_Chowla_2,MatRadTao_AvgChowla,Helfgott-Maks_LogChowla,Cedric_TwoPoint}). See also the related Elliott's conjecture \cite{MatRadTao_AvgChowla, klurman2023elliottsconjectureapplications} for more general multiplicative functions. Much like Wintner, we wish to study these conjectures through the lens of random multiplicative functions.

For Steinhaus RMFs, the probabilistic analogue for the two-point Chowla conjecture was first studied by Najnudel \cite{Najnudel_Chowla}. He conjectured that the normalized partial sums $\frac{1}{\sqrt{N}}\sum_{n\leq N}f(n(n+1))$ converge in distribution to a standard complex Gaussian, as $N\rightarrow \infty$. This was proved and generalized by Klurman-Shkredov-Xu \cite{Oleksiy-RandomChowla} for a larger class of polynomial arguments: they showed that the normalized partial sums $\frac{1}{\sqrt{N}}\sum_{n\leq N}f(P(n))$ converge in distribution to a standard complex Gaussian, for any admissible $P\in \Z[x]$.

\begin{thm}[Steinhaus Random Chowla \cite{Oleksiy-RandomChowla}]
\label{Steinhaus-RC}
    Let $f$ be a Steinhaus random multiplicative function. Then for any polynomial $P\in\Z[x]$ of $\deg P \geq 2$ which is not of the form $P(x) = w(x + c)^d$ for some $w \in \Z, c \in \Q$, we have that
    \[
    \frac{1}{\sqrt{N}}\sum_{n\leq N}f(P(n)) \xrightarrow{d} \mathcal{CN}(0,1),
    \]
    as $N\rightarrow\infty$; that is, the normalized partial sums $\frac{1}{\sqrt{N}}\sum_{n\leq N}f(P(n))$ converge in distribution to a standard complex Gaussian.
\end{thm}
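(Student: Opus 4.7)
The natural approach is the method of moments: I would show that, for all $a,b\in\Z_{\geq 0}$,
\[
\frac{1}{N^{(a+b)/2}}\,\E\!\left[S_N^a\,\overline{S_N}^b\right]\longrightarrow a!\,\delta_{a,b}\qquad \text{as } N\to\infty,
\]
where $S_N:=\sum_{n\leq N}f(P(n))$; the right-hand side is the moment sequence of the moment-determined standard complex Gaussian $\mathcal{CN}(0,1)$. Expanding the moment and applying the Steinhaus orthogonality
\[
\E\!\left[\prod_{i=1}^a f(x_i)\,\overline{\prod_{j=1}^b f(y_j)}\right] = \mathbf{1}\!\left[\prod_i x_i=\prod_j y_j\right],
\]
reduces the problem to the purely combinatorial count
\[
\E\!\left[S_N^a\,\overline{S_N}^b\right]=\#\!\left\{(\vec n,\vec m)\in[1,N]^{a+b}:\prod_{i=1}^a P(n_i)=\prod_{j=1}^b P(m_j)\right\}.
\]

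When $a=b=k$, the \emph{diagonal} contributions --- those tuples in which $(m_1,\ldots,m_k)$ is a permutation of $(n_1,\ldots,n_k)$ --- number exactly $k!\,N^k(1+o(1))$, yielding the target Gaussian moment after normalisation. When $a\neq b$ no such pairing exists, so the full count must be shown to be $o(N^{(a+b)/2})$. The excluded form $P(x)=w(x+c)^d$ demonstrates the tightness of the hypothesis: there $\prod P(n_i)=\prod P(m_j)$ collapses to $\prod(n_i+c)=\prod(m_j+c)$, which has $\asymp N^k(\log N)^{k-1}$ solutions from the divisor function and destroys any Gaussian limit. So the crux is obtaining a power-saving bound on the off-diagonal solutions, using the assumption that $P$ is not of this exceptional shape.

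To exploit this, I would analyse the prime factorisation of $P(n)$ across $n\in[1,N]$. A sieve / Erd\H{o}s--Kac style input gives that, for all but $o(N)$ values of $n\leq N$, the integer $P(n)$ contains a prime factor $p>N^\alpha$ occurring with exponent exactly $1$ in $P(n)$. Such a prime must then appear on the opposite side of $\prod P(n_i)=\prod P(m_j)$, pinning some $m_j$ (or $n_{i'}$) into one of the $\leq\deg P$ residue classes of $P\bmod p$. Summing over large primes $p\ll P(N)$ and bounding the pairs $(n,m)$ with $p\mid\gcd(P(n),P(m))$ via a second-moment sieve should yield a non-diagonal bound of order $N^{k-\eta}$ for some $\eta>0$, handling the $a=b$ case; a similar analysis on the "longer" side handles $a\neq b$, where the imbalance in degrees itself already forces many variables to be untypically small.

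The finer structure of $P$ dictates the sharp quantitative steps. When $P$ is an irreducible quadratic (e.g.\ $x^2+1$), passing to the splitting field $K$ and writing $P(n)=\mathrm{N}_{K/\Q}(\alpha_n)$ converts $\prod P(n_i)=\prod P(m_j)$ into an ideal-factorisation identity in $\mathcal{O}_K$, amenable to $S$-unit / class-group arguments. When $P$ factors into distinct linear forms, the coprimality $\gcd(n-a_i,n-a_j)=O(1)$ lets one treat each factor independently and fall back on Erd\H{o}s-type bounds for equal products of shifted integers. The hardest step, and the main obstacle I anticipate, is making these arguments \emph{uniform in the moment order} $(a,b)$: the combinatorics of matching prime factorisations across $a+b$ polynomial values grows rapidly in $k$, and one must ensure the error remains $o(N^{(a+b)/2})$ with constants controlled well enough in $k$ to reach convergence in distribution rather than merely convergence of finitely many moments.
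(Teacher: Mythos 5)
Your method-of-moments route is a legitimate path and was in fact carried out later by Wang--Xu \cite{Wang-Xu}, who computed all finite moments of $\sum_{n\leq N}f(P(n))$ for Steinhaus $f$; but it is genuinely different from, and considerably more laborious than, the proof strategy attributed to Klurman--Shkredov--Xu in this paper. The KSX proof exploits the martingale difference structure of the partial sums (obtained by grouping terms according to the largest prime factor of $P(n)$) and invokes McLeish's CLT (Lemma \ref{McLeish_CLT}), which reduces the entire central limit theorem to verifying conditions involving only the \emph{second and fourth} moments. Concretely, one needs only the single asymptotic
\[
\#\{m_1,m_2,n_1,n_2\leq N : P(m_1)P(m_2)=P(n_1)P(n_2)\}\sim 2N^2,
\]
rather than a full hierarchy of moment asymptotics, and KSX establish this by viewing the count as a multiplicative energy and deploying Bombieri--Pila bounds for integral points on the curves $aP(x)=bP(y)$, uniformly over the relevant $a,b$. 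Your sieve/large-prime-factor sketch for the off-diagonal terms points in a plausible direction but underestimates the algebraic-geometric input required; the heart of the matter is precisely that uniform point-count on families of curves (compare Lemma \ref{New_BP} in the Rademacher setting), not a large-prime sieve. Finally, your closing worry about needing the error constants ``controlled well enough in $k$'' to upgrade moment convergence to distributional convergence is a red herring: the standard complex Gaussian is moment-determined, so pointwise convergence of each fixed $(a,b)$-moment already implies convergence in distribution with no uniformity in $(a,b)$ required --- and in any case the martingale route bypasses this issue entirely by stopping at the fourth moment.
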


The main technical difficulty in proving the above theorem lies in computing the fourth moment for Steinhaus RMFs evaluated at polynomial arguments, which amounts to counting the number of $m_1,m_2,n_1,n_2\leq N$ for which $P(m_1)P(m_2)=P(n_1)P(n_2)$.  
Klurman-Shkredov-Xu \cite[Theorem 3.2]{Oleksiy-RandomChowla} view this counting problem through the lens of "multiplicative energy", 
and, using strong results from the theory of integral points on absolutely irreducible curves, ultimately prove the correct asymptotic. It turns out that one can compute all finite moments for Steinhaus RMFs at polynomial arguments; this was accomplished by Wang-Xu in \cite{Wang-Xu}. 

Inspired by the techniques used by Wang-Xu in \cite{Wang-Xu}, we are able to prove the more delicate analogue of Theorem \ref{Steinhaus-RC} for Rademacher RMFs. This confirms a conjecture of Najnudel \cite{Najnudel_Chowla} in the Rademacher case 
and addresses the question in \cite[Section 1.4]{Oleksiy-RandomChowla}.

\begin{thm}[Rademacher Random Chowla]
    \label{main_thm}
    Let $f$ be a Rademacher RMF and let $P\in \Z[x]$ be a product of at least two distinct linear factors over $\Z$ or irreducible of degree $2$. Suppose that there is no prime $p$ for which $p^2 \mid P(n)$ for all $n\in \N$. Then, there exists a constant $\kappa_P>0$ such that
    \[
    \frac{1}{\sqrt{\kappa_P N}}\sum_{n\leq N}f(P(n))\xrightarrow{d}\mathcal{N}(0,1),
    \]
    as $N\rightarrow\infty$; that is, the partial sums $\sum_{n\leq N}f(P(n)),$ when appropriately normalized, converge in distribution to a standard (real) Gaussian, as $N\rightarrow\infty.$
\end{thm}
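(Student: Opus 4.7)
The plan is to prove Theorem~\ref{main_thm} by the classical method of moments. Writing $S_N := \sum_{n\le N} f(P(n))$, it is enough to show that for every integer $k \ge 1$,
\[
\frac{\E[S_N^k]}{(\kappa_P N)^{k/2}} \longrightarrow \E[Z^k], \qquad Z \sim \mathcal{N}(0,1),
\]
so that the limit equals $(k-1)!!$ for even $k$ and $0$ for odd $k$. The Rademacher structure furnishes the clean identity
\[
\E\big[f(n_1)\cdots f(n_k)\big] = \one\big[\,n_1\cdots n_k \text{ is a perfect square and every } n_i \text{ is squarefree}\,\big],
\]
so the $k$-th moment of $S_N$ reduces to counting tuples $(m_1,\ldots,m_k) \in [1,N]^k$ for which each $P(m_i)$ is squarefree and $\prod_i P(m_i)$ is a perfect square.

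For the variance ($k=2$), two squarefree values $P(m_1), P(m_2)$ multiply to a square if and only if $P(m_1) = P(m_2)$; this confines $(m_1,m_2)$ to the diagonal together with a lower-dimensional locus (a single reflection line in the quadratic case, finitely many exceptional points in the higher-degree cases by Siegel). Applying a classical squarefree sieve on polynomial values (Ricci, Hooley, and the like) then yields $\E[S_N^2] = \kappa_P N + o(N)$ for some constant $\kappa_P > 0$, whose positivity is precisely encoded by the hypothesis that no prime $p$ satisfies $p^2 \mid P(n)$ for all $n \in \N$.

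For general $k$, I would partition the contributing tuples by the coincidence pattern of the values $P(m_i)$. The \emph{diagonal} contribution, present only for $k = 2j$ even, corresponds to a perfect matching of the $2j$ indices into pairs with equal $P$-values and yields exactly $(2j-1)!!\cdot (\kappa_P N)^j\cdot (1 + o(1))$ by applying the variance computation independently to each pair; this reproduces the Gaussian moment, and forces the odd moments to vanish since no such pairing exists when $k$ is odd. What remains is to bound the \emph{off-diagonal} contribution
\[
T_k(N) = \#\Big\{(m_1,\ldots,m_k) \in [1,N]^k \,:\, \textstyle\prod_i P(m_i) \text{ is a square, and no perfect pairing into equal } P\text{-values exists}\Big\}
\]
and show that $T_k(N) = o(N^{k/2})$.

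This last step is the main obstacle, and is the point where the Rademacher case genuinely departs from the Steinhaus setting of \cite{Oleksiy-RandomChowla, Wang-Xu}. In the Steinhaus method one counts solutions to an equation of the form $P(m_1)\cdots P(m_j) = P(m_{j+1})\cdots P(m_k)$, whereas here the weaker condition "$\prod_i P(m_i)$ is a square" instead requires counting integral points on hyperelliptic-type varieties $V_k : y^2 = \prod_{i=1}^k P(x_i)$. Following the geometric philosophy of Wang--Xu, the plan is to decompose $V_k$ into its irreducible components, isolate the diagonal components (which are exactly responsible for the paired contribution already handled), and bound integral points on the remaining components via the Bombieri--Pila determinant method together with Hilbert irreducibility and Siegel's theorem on integral points of curves of positive genus. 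For $P$ a product of distinct linear factors, this reduces to a multiplicative-energy bound on shifted-prime products in the spirit of \cite[Theorem 3.2]{Oleksiy-RandomChowla}. For $P$ an irreducible quadratic, one must rule out non-trivial square identities of the form $P(x_1)\cdots P(x_j) \equiv P(x_{j+1})\cdots P(x_k) \pmod{\Q^{*2}}$, which relies on the irreducibility of $P$ and a careful genus analysis of the associated curves. The squarefree side-condition is carried along throughout by a standard Selberg-type sieve at the cost of only acceptable error terms; combining everything then yields the asserted Gaussian limit.
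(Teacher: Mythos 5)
You take the method of moments (convergence of all moments $\E[S_N^k]/(\kappa_P N)^{k/2}$) whereas the paper invokes McLeish's martingale CLT, which only requires second and fourth moment estimates together with a cross-terms condition, all of which reduce to one counting statement (Proposition~\ref{counting_squares}). That choice in the paper is deliberate: obtaining $T_k(N) = o(N^{k/2})$ for \emph{every} $k$ is considerably harder than the $k=4$ case, and the paper's fourth moment already requires a nontrivial case analysis (small versus large GCD of $P(n_1), P(n_2)$), a bootstrapping argument for large GCDs, and a uniform point-count over the family of curves $aP(x)-bP(y)=0$ via Bombieri--Pila and Cilleruelo--Garaev. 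Your outline does not engage with this mechanism at all, and the tools you name do not do what you need. Bombieri--Pila is a theorem about \emph{plane curves}, while $V_k\colon y^2 = \prod_{i\le k}P(x_i)$ is a $(k-1)$-dimensional variety for $k\ge 2$; to use Bombieri--Pila one must first fiber, and the resulting parameters (the analogues of $a,b$ above) can be polynomially large in $N$, so one needs uniformity in the fiber. Siegel's theorem is ineffective and gives no uniformity in the parameters, so it cannot deliver a power-saving or even a quantitative $o(N^{k/2})$. Hilbert irreducibility likewise produces no quantitative count. And the appeal to \cite[Theorem 3.2]{Oleksiy-RandomChowla} is to a multiplicative-energy bound for the \emph{Steinhaus} equation $P(m_1)P(m_2)=P(n_1)P(n_2)$, which is a different Diophantine problem from ``$\prod_i P(m_i)$ is a square with each factor squarefree.''

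A second issue: you treat the squarefree side-condition as a nuisance to be carried along ``by a standard Selberg-type sieve at acceptable cost.'' In fact it is load-bearing. It is exactly the squarefree hypothesis that forces the rigid structure $P(n_1)=P(n_2)$ in the second moment and, in the fourth moment, that converts ``$\prod P(n_i)=\square$'' into the clean factorization $P(n_1)P(n_2)/d_1^2 = P(n_3)P(n_4)/d_2^2$ with $d_i=(P(n_{2i-1}),P(n_{2i}))$ squarefree, which is the starting point for the paper's bootstrapping. Without it one is in the regime of completely multiplicative Rademacher RMFs, where (as the paper discusses) already the \emph{second} moment requires counting integral points on families of quadratic twists of $y^2=P(x)$, and the problem becomes conditional on deep inputs such as \cite{Hooley-Pell} or \cite{Granville-twists}. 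So the squarefree condition is not an error term to sieve away; it is what makes the problem tractable, and treating it as a perturbation would break the argument.

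Finally, a small point: the off-diagonal locus $P(m_1)=P(m_2)$, $m_1\ne m_2$ contributes $O_P(1)$ not by Siegel but simply because any real polynomial of degree $\ge 1$ is eventually monotone, hence eventually injective on the positive integers; invoking Siegel here is both an overcomplication and, strictly speaking, not sufficient (the curve $\{P(x)=P(y)\}\setminus\{x=y\}$ can have genus-zero components).

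Summary: the high-level reduction to counting square products of squarefree polynomial values is correct, and the diagonal/pairing computation reproducing $(k-1)!!$ is fine, but the crux --- the off-diagonal bound --- is not proved and the proposed toolkit (Bombieri--Pila on a higher-dimensional variety, Siegel, Hilbert irreducibility) does not close the gap. The paper avoids exactly this difficulty by using McLeish's CLT, so that the only Diophantine input needed is the $k=4$ estimate, and even that is established by a delicate GCD-splitting and bootstrapping argument which your proposal does not describe.
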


\begin{rem}
    If there is no prime $p$ such that $p^2\mid P(n)$ for all $n\in \N$, then we call the polynomial $P$ \textbf{admissible}.
\end{rem}

The proofs of Theorems \ref{Steinhaus-RC} and \ref{main_thm} start off with the same strategy used by Harper in \cite{Harper_Martingale}, where he exploits the martingale difference sequence structure provided in the partial sums of RMFs: proving a Central Limit Theorem for a martingale difference sequence amounts to understanding the second and fourth moment for the random variables in question (McLeish's CLT \cite{McLeish} or Lemma \ref{McLeish_CLT})\footnote{We remark that Soundararajan and Xu \cite{Xu_CentralLimitTheorems} have studied the more general case for when the partial sums $\sum_{n\leq N}a_nf(n)$ satisfy a central limit theorem, for deterministic coefficients $(a_n)_n$.}. In the case of Steinhaus RMFs, the second and fourth moment estimates amount to obtaining asymptotics for the following:
\begin{align*}
\#\{m,n\leq N: P(m)=P(n)\}\;\;\&\;\;\#\{m_1,m_2,n_1,n_2\leq N: P(m_1)P(m_2)=P(n_1)P(n_2)\},
\end{align*}
respectively. The former is trivial, as $P$ will be injective for all sufficiently large arguments. The latter was dealt with initially in \cite{Oleksiy-RandomChowla} and later in \cite{Wang-Xu} as a special case (with a related but different proof).

The situation for Rademacher RMFs is more complicated. Indeed, the second and fourth moment estimates in our setting amount to obtaining asymptotics for
\begin{align}
\#\{n_1,n_2\leq N:P(n_1)P(n_2)=\square,\text{  $P(n_i)$ squarefree}\}
\end{align}
and
\begin{align}
\label{counting_problem}
\#\{n_1,n_2,n_3,n_4\leq N: P(n_1)P(n_2)P(n_3)P(n_4)=\square,\text{  $P(n_i)$ squarefree}\},
\end{align}
respectively. The former is still ``trivial'', as a product of two squarefree numbers is a perfect square if and only if the two numbers are equal; and so, the second moment amounts to counting the number of $n\leq N$ for which $P(n)$ is squarefree. This is a notoriously difficult problem for general polynomials, which we briefly discuss at the end of Section \ref{preliminaries}. The main technical result in this paper is proving an asymptotic for \eqref{counting_problem} (Proposition \ref{counting_squares}). The key new ingredient to our proof is a bootstrapping argument, where it is essential that our polynomials are either products of linear factors or irreducible quadratics.

\subsection{Large Fluctuations} 

After establishing a central limit theorem for the partial sums $\sum_{n\leq N} f(P(n))$, it becomes natural to investigate their large fluctuations. Given a sequence of i.i.d.\;random variables $(X_n)_n$, with mean $0$ and variance $1$, the law of iterated logarithm asserts that
\[
\limsup_{N\to\infty} \frac{\lvert \sum_{n\leq N} X_n\rvert}{\sqrt{2N\log\log N}} = 1,
\]
almost surely. As we shall presently see, we show that this type of lower bound also holds for $\sum_{n\leq N} f(n^2 + 1)$. 

Large fluctuations of $\sum_{n\leq N} f(n)$ where $f$ is a Rademacher or Steinahus RMF have been addressed in \cite{Harper_Fluctuations} -- see the introduction for a historical overview. Large fluctuations of Steinhaus RMFs evaluated at polynomial arguments were studied in \cite{Oleksiy-RandomChowla}. Building upon \cite{Harper_Fluctuations} and following the strategy in \cite{Oleksiy-RandomChowla}, we are able to prove the corresponding lower bound in the law of iterated logarithm\footnote{One should view $n^2 + 1$ here as representative of irreducible quadratic polynomials, although there is a minor caveat with this which we explain in Remark \ref{rem_quadr}.} for $\sum_{n\leq N} f(n^2 + 1)$. The corresponding upper bound and the more delicate case when $P$ is a product of linear factors (for Steinhaus as well as Rademacher RMFs) will be addressed in future work. 

\begin{thm}
    \label{large_values_theorem} Let $f$ be a Rademacher $RMF$. There exist arbitrarily large values of $N$ such that
    \begin{equation}
        \Big|\sum_{n\leq N} f(n^2 + 1)\Big| \gg \sqrt{N\log\log N},
    \end{equation}
    almost surely.
\end{thm}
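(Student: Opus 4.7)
The plan is to adapt the conditioning-on-small-primes strategy from \cite{Harper_Fluctuations} (implemented in the Steinhaus setting in \cite{Oleksiy-RandomChowla}) to the Rademacher case with $P(x) = x^2 + 1$. Write $S_N := \sum_{n \le N} f(n^2 + 1)$. First I would fix a rapidly growing sequence, say $N_k = 2^{2^k}$, set a threshold $y_k := N_{k-1}$, and study the increments
\[
T_k := S_{N_k} - S_{N_{k-1}} = \sum_{N_{k-1} < n \le N_k} f(n^2 + 1).
\]
Factoring $f(m) = f_{\le y_k}(m) \cdot f_{> y_k}(m)$ according to prime size, where $f_{\le y_k}(m) = \prod_{p \mid m,\, p \le y_k} f(p)$, and letting $\mathcal{F}_k := \sigma(f(p) : p \le y_k)$, conditional on $\mathcal{F}_k$ the quantity $T_k$ becomes a sum of independent $\pm 1$ random variables weighted by the $\mathcal{F}_k$-measurable coefficients $f_{\le y_k}(n^2+1)$.

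Next I would establish that the conditional variance concentrates at the right value, namely
\[
\E[T_k^2 \mid \mathcal{F}_k] = (1 + o(1)) \, \kappa_P \, (N_k - N_{k-1}),
\]
with $\mathcal{F}_k$-probability tending to $1$ as $k \to \infty$. Writing this conditional second moment as a double sum over pairs $(n_1, n_2) \in (N_{k-1}, N_k]^2$, the expectation of the high-prime factor vanishes unless the large-prime squarefree parts of $n_1^2 + 1$ and $n_2^2 + 1$ agree; invoking injectivity of $n \mapsto n^2+1$, admissibility (which controls primes with $p^2 \mid n^2 + 1$), and the squarefree counting underlying the second moment estimate in Theorem \ref{main_thm}, the off-diagonal contribution is negligible. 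A conditional Berry--Esseen bound driven by the fourth moment estimate behind Theorem \ref{main_thm} then shows that $T_k/\sqrt{\mathrm{Var}(T_k \mid \mathcal{F}_k)}$ is approximately standard Gaussian under $\Pr(\cdot \mid \mathcal{F}_k)$, so on the good event the usual Gaussian tail bound yields
\[
\Pr\bigl( |T_k| \ge c\sqrt{N_k \log \log N_k} \bigm| \mathcal{F}_k \bigr) \gg \frac{1}{(\log k)^{c^2/(2\kappa_P)}},
\]
for a sufficiently small constant $c > 0$.

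Finally I would invoke L\'evy's conditional version of the second Borel--Cantelli lemma along the nested filtration $(\mathcal{F}_k)_k$ to conclude that the events $\{|T_k| \ge c\sqrt{N_k \log \log N_k}\}$ occur for infinitely many $k$ almost surely, since the conditional probabilities above are not summable. Because $N_k - N_{k-1} \sim N_k$, such a large increment forces at least one of $|S_{N_k}|, |S_{N_{k-1}}|$ to exceed $\tfrac{c}{2} \sqrt{N_k \log \log N_k}$, yielding the theorem.

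The main obstacle is the second step: extracting the diagonal asymptotic for the conditional variance on a set of high $\mathcal{F}_k$-probability while controlling two competing error sources --- the off-diagonal pairs $(n_1, n_2)$ whose large-prime parts accidentally coincide (handled by the squarefree analysis of the quadratic case from the proof of Theorem \ref{main_thm}), and the contribution of small primes $p \le y_k$ with $p^2 \mid n^2 + 1$ (handled via a local density calculation exploiting admissibility). A secondary subtlety is that the events $\{|T_k| \ge \ldots\}$ across different $k$ are not genuinely independent, since the primes lying in $(y_k, y_{k+1}]$ appear in both $T_k$ and the $\mathcal{F}_{k+1}$-measurable coefficients of $T_{k+1}$; one must therefore arrange the events to be measurable with respect to the nested filtration $(\mathcal{F}_{k+1})_k$ before applying the conditional Borel--Cantelli lemma.
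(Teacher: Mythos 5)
Your strategy (conditioning on small primes, conditional CLT, conditional Borel--Cantelli along doubly exponential scales) is genuinely different from the paper's, but as written it has several gaps that are more than cosmetic.

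\textbf{The conditional independence claim is false.} You write that, conditioned on $\mathcal F_k = \sigma(f(p):p\le y_k)$, the increment $T_k = \sum_{N_{k-1}<n\le N_k} f_{\le y_k}(n^2+1)\,f_{>y_k}(n^2+1)$ is ``a sum of independent $\pm 1$ random variables weighted by $\mathcal F_k$-measurable coefficients.'' But the factors $f_{>y_k}(n^2+1)$ for different $n$ are \emph{not} independent: they are products of the same underlying $f(p)$'s for large $p$, and distinct $n, n'$ frequently share a large prime divisor of $n^2+1$ and $n'^2+1$ (any $p\mid n+n'$ with $p\equiv 1 \pmod 4$, say). Nor is each $f_{>y_k}(n^2+1)$ a single $\pm 1$: it is a product of up to $O(1)$ such signs. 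To make a conditional CLT go through you must reorganize the sum by a \emph{unique} large prime factor, i.e.\ restrict to those $n$ for which $n^2+1$ has a prime factor $p\gg N\log N$ so that $(n^2+1)/p$ has all small prime factors, and then index by $p$. That a positive proportion of $n$ satisfy this (with $n^2+1$ also squarefree) is precisely the content of the paper's Lemma \ref{posProp}, which rests on Maynard--Rudnick; your sketch has no analogue of this input, and without it there is no clean sum of independent terms to approximate by a Gaussian.

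\textbf{The sparsity choice breaks Borel--Cantelli.} With $N_k = 2^{2^k}$ one has $\log\log N_k \asymp k$, so a Gaussian tail bound gives $\mathbb P(|T_k|\ge c\sqrt{N_k\log\log N_k}\mid \mathcal F_k) \approx \exp(-c^2 k/(2\kappa_P))$, which is \emph{summable} in $k$. The conditional second Borel--Cantelli lemma then gives finitely many occurrences, not infinitely many. (Your stated lower bound $\gg (\log k)^{-c^2/(2\kappa_P)}$ is not consistent with this choice of $N_k$; it would require $\log\log N_k \asymp \log\log k$.) One can fix the arithmetic by taking $N_k = 2^k$, which yields probabilities $\gg k^{-c^2/(2\kappa_P)}$ (divergent for small $c$), but this raises a harder issue.

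\textbf{The adaptedness problem is more serious than you suggest.} With $N_k = 2^k$ and your filtration, the event $E_k = \{|T_k|\ge\ldots\}$ depends on $f(p)$ for $p$ up to about $N_k^2$, while $\mathcal F_{k+1}$ only records primes up to $N_k$. So $E_k\notin\mathcal F_{k+1}$, and there is no obvious finer filtration that both contains the $E_k$ and is coarse enough at step $k$ to leave fresh randomness for $T_{k+1}$ with a usable conditional tail bound. The paper sidesteps this entirely: it proves a \emph{localized} result at scale $X$ with probability $1-O((\log\log X)^{-1/50})$ by taking a maximum over $k\asymp\log X$ scales in $[X, X^{(\log X)^2}]$ (so the $\sqrt{\log\log X}$ comes from $\sqrt{\log k}$ via the normal comparison lemma, Lemma \ref{normComparisonLemma}, not from the tail of a single Gaussian), and then applies ordinary first Borel--Cantelli over a sparse sequence of $X$. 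Crucially, the paper conditions on $f(q)$ for $q$ \emph{outside} a carefully chosen family $\mathcal A = \bigsqcup_i\mathcal A_i$ of \emph{large} primes, engineered so that $\mathcal A_i\cap\mathcal A_j = \emptyset$; this makes the sums $S_{i,1}$ at different scales genuinely (conditionally) independent with no leakage between scales, which is exactly what your filtration fails to provide. You should also note that, even once independence is set up, the conditional variance is itself random; the paper controls its fluctuations by a variance-of-variance computation driven by Proposition \ref{counting_squares}, a step that would need an analogue in your framework.
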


For a detailed outline of the proof, see the beginning of Section \ref{large_values_section}. 

\begin{rem}
One could ask whether Theorem \ref{large_values_theorem} reflects the truth of large fluctuations in the deterministic case of partial sums of the M\"obius function. This is a difficult question in general, although Gonek's conjecture and refinements due to Ng \cite{Ng_2004} predict the existence of a positive constant $c$ such that 
\begin{equation}
    \limsup_{N\to\infty} \frac{\sum_{n\leq N}\mu(n)}{\sqrt{N}(\log\log\log N)^{5/4}} = c.
\end{equation}
The quantity $\sqrt{N}(\log\log\log N)^{5/4}$ is much smaller than one would naively conjecture based on the law of iterated logarithm. Nonetheless, when evaluating $\mu$ on admissible polynomials, we believe that the random prediction does capture the truth of large fluctuations. In particular, we share the belief with the authors of \cite{Oleksiy-RandomChowla} that $\sum_{n\leq N} \mu(n^2 + 1) = O(\sqrt{N\log\log N})$ holds and that this is sharp. 
\end{rem}

\subsection{Completely Multiplcative RMFs and Quadratic Twists}

We close the introduction section with some remarks on \textit{completely multiplicative} Rademacher RMFs. Instead of modeling the M{\"o}bius function by a random variable, one can choose to model the Liouville function instead. The difference here is that the corresponding RMF should be completely multiplicative, taking the values $\pm 1$ on the primes with equal probability. We continue to have the desired martingale structure, so that Lemma \ref{McLeish_CLT} continues to apply, but now the second and fourth moment estimates correspond to obtaining asymptotics for the following:
\begin{align*}
\#\{n_1,n_2\leq N: P(n_1)P(n_2)=\square\}\;\;\&\;\;\#\{n_1,n_2,n_3,n_4\leq N: P(n_1)P(n_2)P(n_3)P(n_4)=\square\},
\end{align*}
respectively, where we emphasize that the restriction to squarefree polynomial values has been dropped. Already for the second moment in this setting, there are off diagonal terms which require some work to count. Indeed, given $P(n_i)$, this can be written uniquely as $P(n_i)=d_iy_i^2$, for some squarefree $d_i$; in particular, if $P(n_1)P(n_2)=\square,$ then necessarily $d_1=d_2$. Hence, the second moment estimate becomes
\begin{align*}
\#\{n_1,n_2\leq N: P(n_1)P(n_2)=\square\}
=
\sum_d \left(\sum_{x\leq N}\one_{P(x)=d\square}\right)^2,
\end{align*}
which counts the number of integral points up to a given height on the family of quadratic twists of the curve with affine model $C:y^2=P(x)$. In the case where $P$ has degree $2$, we are able to solve this counting problem using work of Hooley on Pell equations \cite{Hooley-Pell}: Theorem 1 of \cite{Hooley-Pell} tells us that almost all twists will have at most one integral point of low height, so that we can replace the second moment for these $d$ by the first moment, which exactly counts diagonal solutions $n_1=n_2$. For the remaining twists, trivial bounds suffice. For higher degree polynomials, analogues of Hooley' results exist, conditionally on the $abc$-conjecture and for suitable $P$, by work of Granville \cite[Theorem 4 (i)]{Granville-twists}; as such, we are able to solve special cases of the second moment problem for completely multiplicative Rademacher RMFs. The fourth moment is substantially more complicated. For starters, one would like to know that almost all twists of the surface $y^2=P(x_1)P(x_2)$ have at most one integral point. Nevertheless, we still expect the behaviour to be Gaussian and this is a subject of ongoing investigation.

\subsection{Outline of the paper} 

Our paper is organized as follows. In Section \ref{preliminaries}, we provide the necessary probabilistic background needed to understand the hypotheses to McLeish's CLT (Lemma \ref{McLeish_CLT}). Section \ref{preliminaries} also contains standard results on polynomial congruences, integral points on absolutely irreducible curves, and squarefree polynomial values. In Section \ref{main_section}, we provide a proof of our main technical result (Proposition \ref{counting_squares}), which effectively solves the counting problem discussed in (\ref{counting_problem}). In the penultimate section of this paper, we incorporate the martingale difference sequence structure with the counting lemmas we have proven, thereby verifying the hypotheses to McLeish's CLT and proving Theorem \ref{main_thm}. Finally, in Section \ref{large_values_section}, we prove Theorem \ref{large_values_theorem} on large values of $\frac{1}{\sqrt{N}}\sum_{n\leq N}f(n^2+1)$.

\section{Preliminaries and Initial Reductions}
\label{preliminaries}

In this section, we collect several preliminary results which we use throughout our paper. We begin with the necessary probabilistic background on martingale difference sequences and McLeish's Central Limit Theorem for Martingales (Lemma \ref{McLeish_CLT}). We remark that Lemma \ref{McLeish_CLT} is the starting point for much of the literature surrounding RMFs (e.g., \cite{Harper_Martingale,Oleksiy-RandomChowla}).

\subsection{Martingales and McLeish's CLT}

Let $(\Omega, \mathcal{F}, \mathbb{P})$ be a probability space and let $(\mathcal{F}_n)_n$ be a filtration on $(\Omega, \mathcal{F}, \mathbb{P})$ (i.e., a sequence of sub-$\sigma$-algebras satisfying $\mathcal{F}_0\subseteq \mathcal{F}_1\subseteq\dots\subseteq \mathcal{F}$). A sequence of random variables $(X_n)_n$ is called a \textit{martingale difference sequence} on $(\Omega, \mathcal{F}, (\mathcal{F}_n)_n,\mathbb{P})$ if
\begin{enumerate}
    \item $\E\left[|X_n|\right]<\infty$ for all $n\in\N;$
    \item $\E\left[X_n|\mathcal{F}_{n-1}\right]=0$ a.s.\;for all $n\in \N$.
\end{enumerate}

The following result is a Central Limit Theorem for multi-indexed martingales:

\begin{lem}[McLeish's CLT - {\cite[{Corrolary 2.13}]{McLeish}}]
\label{McLeish_CLT}
Let $(k_N)_N\subset \N$ and suppose $(X_{i,N})_i$ is a \textit{martingale difference sequence} on $(\Omega,\mathcal{F},(\mathcal{F}_{i,N})_i,\mathbb{P})$ for $1\leq i\leq k_N$. Set $S_N:=\sum_{i\leq k_N}X_{i,N}$ and suppose the following conditions hold:
\begin{enumerate}
    \item (Normalized Variances) $\sum_{i\leq k_N}\E\left[X_{i,N}^2\right]\rightarrow 1,$ as $N\rightarrow \infty$.
    \item (Lindeberg Condition) For all $\varepsilon>0$, $\sum_{i\leq k_N}\E\left[X_{i,N}^2\one_{|X_{i,N}|>\varepsilon}\right]\rightarrow 0,$ as $N\rightarrow \infty$.
    \item (Cross-terms Condition) $\limsup_{N\rightarrow \infty} \left(\sum_{i,j\leq k_N, i\neq j}\E\left[X_{i,N}^2X_{j,N}^2\right]\right)\leq 1$.
\end{enumerate}
Then: 
$$S_N\xrightarrow{d}\mathcal{N}(0,1),$$
as $N\rightarrow\infty;$ that is, $S_N$ converges in distribution to a normal random variable with mean $0$ and variance $1$.

\end{lem}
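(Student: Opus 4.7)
The plan is to establish the central limit theorem by proving convergence of characteristic functions, following McLeish's approach. Fix $t\in\R$; the goal is $\E[e^{itS_N}]\to e^{-t^2/2}$. The key device is the auxiliary random variable
\[
T_N=T_N(t):=\prod_{i\leq k_N}(1+itX_{i,N}),
\]
which serves as a tractable proxy for $e^{itS_N}$. Writing $e^{itS_N}=T_N\cdot U_N$ with
\[
U_N:=\prod_{i\leq k_N}\frac{e^{itX_{i,N}}}{1+itX_{i,N}},
\]
the analysis splits naturally into understanding $\E[T_N]$ and the behavior of $U_N$.

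The first step is to show $\E[T_N]=1$ for every $N$, and this is where the martingale difference structure enters. The partial product $T_N^{(j)}:=\prod_{i\leq j}(1+itX_{i,N})$ is $\mathcal{F}_{j,N}$-measurable, so conditioning on $\mathcal{F}_{k_N-1,N}$ and using $\E[X_{k_N,N}\mid\mathcal{F}_{k_N-1,N}]=0$ gives $\E[T_N\mid\mathcal{F}_{k_N-1,N}]=T_N^{(k_N-1)}$, and iterating yields $\E[T_N]=1$. The second step is to show that $U_N\to e^{-t^2/2}$ in probability. Formally expanding via $\log(1+iy)=iy-y^2/2+O(y^3)$ for small $y$ produces
\[
\log U_N = itS_N-\log T_N = -\frac{t^2}{2}\sum_{i\leq k_N}X_{i,N}^2+O\Big(|t|^3\sum_{i\leq k_N}|X_{i,N}|^3\Big).
\]
Condition (1) combined with condition (3) will force $\sum_i X_{i,N}^2\to 1$ in probability through a variance calculation, while the Lindeberg condition (2) controls the cubic remainder after splitting according to $|X_{i,N}|\leq\varepsilon$ versus $|X_{i,N}|>\varepsilon$, and also supplies $\max_i |X_{i,N}|\to 0$ in probability, which is needed to justify the Taylor expansion on the bulk.

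The final step combines the two: since $U_N$ converges in probability to the deterministic constant $e^{-t^2/2}$, one would like to conclude $\E[e^{itS_N}]=\E[T_N U_N]\to e^{-t^2/2}\cdot\E[T_N]=e^{-t^2/2}$, after which L\'evy's continuity theorem delivers the distributional convergence. The main obstacle is justifying this passage to the limit, since $|T_N|^2=\prod_i(1+t^2 X_{i,N}^2)\leq \exp(t^2\sum_i X_{i,N}^2)$ is merely bounded on average rather than uniformly. The standard remedy is a truncation argument: replace each $X_{i,N}$ by $X_{i,N}\one_{|X_{i,N}|\leq \varepsilon}$, use Lindeberg to show the characteristic function is barely perturbed by this replacement, and on the truncated array obtain the uniform bound $|T_N|\leq \exp(Ct^2)$ needed for dominated convergence. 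This truncation, together with verifying that the cross-term condition survives it, is the technical crux of the argument.
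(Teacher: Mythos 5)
The paper does not supply a proof of this lemma; it is invoked as a black box via the citation to McLeish's Corollary 2.13. There is therefore no internal argument to compare against, and your proposal is effectively a reconstruction of McLeish's original proof. Your high-level outline is indeed the right one: it is exactly McLeish's device of factoring $e^{itS_N}=T_N U_N$ with $T_N=\prod_i(1+itX_{i,N})$, exploiting the martingale property to get $\E[T_N]=1$, using Lindeberg to obtain $\max_i|X_{i,N}|\to 0$ in probability (which licenses the Taylor expansion), and using conditions (1)+(3) to obtain $\sum_i X_{i,N}^2\to 1$ in probability via a variance estimate, whence $U_N\to e^{-t^2/2}$ in probability.

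The one step that is not correct as stated is the final passage to the limit. You claim that truncating to $Y_{i,N}:=X_{i,N}\one_{|X_{i,N}|\le\varepsilon}$ yields a \emph{uniform} bound $|T_N|\le\exp(Ct^2)$. It does not: after truncation one only has $|T_N|^2=\prod_i(1+t^2Y_{i,N}^2)\le\exp\bigl(t^2\sum_i Y_{i,N}^2\bigr)$, and the exponent is a random variable, not a constant. The pointwise bound $(1+t^2\varepsilon^2)^{k_N/2}$ blows up since $k_N\to\infty$. What McLeish actually needs, and proves, is \emph{uniform integrability} of the family $(T_N)$, not pointwise boundedness; this is precisely his hypothesis (a) in Theorem 2.3, and verifying it from conditions (1)--(3) is where the real technical work of Corollary 2.13 sits. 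Concretely, one works on the high-probability event $\{\sum_i Y_{i,N}^2\le C\}$, on which $|T_N|\le\exp(t^2C/2)$, and then one must separately show $\E\bigl[|T_N|\one_{\sum Y^2>C}\bigr]\to 0$; that last step does not come for free and requires a further argument (McLeish's Lemma 2.5 and its use of the second-moment control on $\sum_i X_{i,N}^2$). You should replace the phrase ``uniform bound \ldots needed for dominated convergence'' with a uniform integrability argument for $T_N$, noting that condition (3) is used not only to control $\operatorname{Var}(\sum_i X_{i,N}^2)$ but also, through the concentration of $\sum_i X_{i,N}^2$, to obtain the integrability of the dominating quantity $\exp(t^2\sum_i Y_{i,N}^2)$. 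With that repair the outline is sound.
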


\begin{rem}
\label{fourth_moment_condition}
    We remark that condition (2) can be replaced (via Chebyshev's Inequality) by the stronger: 
    \begin{enumerate}
      \setcounter{enumi}{1}
        \item (Fourth Moment Condition) $\sum_{i\leq k_N}\E\left[X_{i,N}^4\right]\rightarrow 0,$ as $N\rightarrow \infty$.
    \end{enumerate}
\end{rem}

Let 
\[
M_N:=\sum_{n\leq N}f(P(n))\;\;\&\;\;M_{p,N}:=\sum_{\substack{{n\leq N}\\{P^+(P(n))=p}}}f(P(n))
\]
and consider the normalized partial sums
\[
S_N:=\frac{1}{\sqrt{\E\left[M_N^2\right]}}M_N\;\;\&\;\;S_{p,N}:=\frac{1}{\sqrt{\E\left[M_N^2\right]}}M_{p,N}.
\]
We apply Lemma \ref{McLeish_CLT} with $k_N:=\max\{p:p\mid P(j),j\leq N\}$ and with $(X_{i,N})_i$ a sequence of random variables indexed by the primes:
\[
X_{i,N} 
\equiv
X_{p,N}
:=
S_{p,N},
\]
so that
\[
S_N
=
\frac{1}{\sqrt{\E\left[M_N^2\right]}}\sum_{n\leq N}f(P(n))
=
\sum_{p\leq k_N}\left(\frac{1}{\sqrt{\E\left[M_N^2\right]}}\sum_{\substack{{n\leq N}\\{P^+(P(n))=p}}}f(P(n))\right)
=
\sum_{p\leq k_N} S_{p,N}.
\]
We remark that $(S_{p,N})_p$ is a martingale difference sequence for all $N$, relative to the natural filtration
\begin{align*}
\mathcal{F}_{p,N}
&:=\sigma\left(\{S_{q,N}:q\leq p\right\})\\
&=
\text{the sigma algebra generated by the set $\{S_{q,N}:q\leq p\}$}.
\end{align*}
Thus, the bulk of this paper 
is dedicated to evaluating $\E\left[M_N^2\right]$ and to show that the sequence of random variables $(S_{p,N})_p$ satisfies hypotheses (1), (2), and (3) of Lemma \ref{McLeish_CLT}. In order to do so, we first need some results on squarefree values of, and general divisor bounds for, integer polynomials, as well as results on integral points of curves. 

\subsection{Squarefree Values and Divisor Bounds}

In Section \ref{Section_Proof}, we use Lemma \ref{McLeish_CLT} to reduce the proof of Theorem \ref{main_thm} to computing the second and fourth moment of $S_{N}$. We make use of the following results on polynomial congruences and of the celebrated theorem of Bombieri-Pila on integral points of absolutely irreducible curves.

\begin{lem}[Lagrange's Theorem]
\label{Lagrange_Prime}
Let $Q\in\Z[x]$, $y,N\in\N$, and $\omega(y):=\#\{p:p\mid y\}$. If $y$ is squarefree and $Q$ is not the zero polynomial modulo $p$ for any prime $p\mid y$, then:
    \begin{enumerate}
        \item $$\#\{x\Mod{y}:  Q(x)\equiv 0\Mod{y}\}\leq (\deg Q)^{\omega(y)};$$
        \item $$\#\{x\leq N:  Q(x)\equiv 0\Mod{y}\}\leq (\deg Q)^{\omega(y)}\left(\frac{N}{y}+1\right).$$
    \end{enumerate}
\end{lem}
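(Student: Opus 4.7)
The plan is to use the Chinese Remainder Theorem to reduce the problem to the prime-modulus case, then invoke the classical Lagrange theorem for polynomials over a field.

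For part (1), since $y$ is squarefree we write $y=p_1\cdots p_k$ with the $p_i$ distinct primes and $k=\omega(y)$. The Chinese Remainder Theorem gives a ring isomorphism $\Z/y\Z\cong\prod_{i=1}^k\Z/p_i\Z$, under which solutions to $Q(x)\equiv 0\pmod{y}$ correspond bijectively to tuples $(x_1,\dots,x_k)$ with $Q(x_i)\equiv 0\pmod{p_i}$ for each $i$. The total count therefore factors as $\prod_{i=1}^k N_i$, where $N_i$ denotes the number of roots of $Q$ in $\F_{p_i}$. By hypothesis, the reduction $\bar{Q}\in\F_{p_i}[x]$ is a nonzero polynomial of degree at most $\deg Q$; since $\F_{p_i}$ is a field, the classical Lagrange theorem for polynomials over a field yields $N_i\leq\deg Q$, and multiplying gives the bound $(\deg Q)^{\omega(y)}$.

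Part (2) follows at once by partitioning $\{1,\dots,N\}$ into residue classes modulo $y$. Each such class contributes at most $\lfloor N/y\rfloor+1\leq N/y+1$ integers, and part (1) bounds the number of residue classes that contain a solution by $(\deg Q)^{\omega(y)}$. Multiplying these two estimates yields the stated bound.

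There is no substantive obstacle here: this is a textbook combination of the Chinese Remainder Theorem with Lagrange's theorem for polynomials over a finite field. The only mild subtlety is that the reduction $\bar{Q}$ modulo a prime $p$ may have degree strictly less than $\deg Q$ if $p$ divides the leading coefficient of $Q$, but this only strengthens the bound $N_i\leq\deg Q$. The hypothesis that $Q$ is not the zero polynomial modulo any $p\mid y$ is precisely what guarantees that each $\bar{Q}$ is nonzero, so that the field-theoretic Lagrange bound can be applied.
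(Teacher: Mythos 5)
Your proof is correct and follows essentially the same route as the paper: reduce part (1) via the Chinese Remainder Theorem to Lagrange's bound for polynomial roots over $\F_p$, then deduce part (2) by a counting argument over residue classes. The only cosmetic difference is that the paper tiles $[1,N]$ by complete residue systems of length $y$ (at most $N/y+1$ of them, each with at most $(\deg Q)^{\omega(y)}$ solutions), whereas you partition by the $(\deg Q)^{\omega(y)}$ solution classes (each meeting $[1,N]$ in at most $N/y+1$ points); these are equivalent.
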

\begin{proof}
    The first part follows from Lagrange's Theorem on solutions of polynomial congruences with prime moduli (e.g.,\;Theorem 5.21 of \cite{Apo}), together with the Chinese Remainder Theorem (e.g.,\;Theorem 5.28 of \cite{Apo}). The second part trivially follows from the first, by splitting the interval $[1,N]$ into $\leq N/y+1$ complete residue systems.
\end{proof}

Among much of the case work that is required to prove Theorem \ref{main_thm}, some cases reduce to counting integral points on curves. As such, we will make use of the celebrated theorem of Bombieri-Pila and a less general result which is better suited for curves of degree $2$, explicitly stated by Cilleruelo and Garaev in \cite{Jacques_Quaratic}; we thank Jacques Benatar for providing the latter reference.

\begin{lem}[Bombieri-Pila {\cite[Theorem 5]{Bombieri-Pila}}]
\label{Bombieri-Pila_Lemma}
    Let $\mathcal{C}$ be an absolutely irreducible curve of degree $d\geq 2$ and let $N$ be a positive integer $\geq \exp(d^6)$. Then the number of integral points on $\mathcal{C}$ and inside the square $[0,N]\times [0,N]$ is at most $N^{1/d}\exp\left(12\sqrt{d\log N\log\log N}\right).$
\end{lem}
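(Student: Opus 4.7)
The plan is to apply the \emph{determinant method} of Bombieri and Pila. The underlying idea is to force the integer points of $\mathcal{C}$ inside $[0,N]^2$ to lie on the intersection of $\mathcal{C}$ with one of a small family of \emph{auxiliary} algebraic curves of controlled degree $D$, after which B\'ezout's theorem and the absolute irreducibility of $\mathcal{C}$ will supply the final bound.

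First I would fix a degree parameter $D$, to be optimized later in terms of $d$ and $N$, and list the $M = \binom{D+2}{2}$ monomials $x^i y^j$ with $i + j \le D$. Given $M$ candidate integer points $(x_k,y_k) \in [0,N]^2 \cap \mathcal{C}$, I would form the $M \times M$ matrix $\bigl(x_k^{i} y_k^{j}\bigr)_{k,(i,j)}$, whose determinant $\Delta$ is an integer. The naive Archimedean bound on $|\Delta|$, coming only from the size of the entries, is much too large; but by restricting to $M$-tuples whose $x$-coordinates all lie in a sufficiently short subinterval of $[0,N]$ and Taylor-expanding each monomial along a local Puiseux branch of $\mathcal{C}$, I can exploit the cancellations in the resulting Vandermonde-type determinant to drive $|\Delta|$ below $1$. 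Being a strictly smaller integer, $\Delta$ must then vanish, which by linear dependence of columns produces a nonzero polynomial $F(x,y)$ of total degree $\le D$ vanishing at all $M$ chosen points.

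Next I would implement a covering argument: partition the $x$-range $[0,N]$ into short subintervals of length $L$ on which the sharpened determinant estimate succeeds. On each subinterval, every integer point of $\mathcal{C}$ in the strip lies on some auxiliary $F$ of degree $\le D$. Because $\mathcal{C}$ is absolutely irreducible of degree $d$, after removing any factor of $F$ equal to the defining polynomial of $\mathcal{C}$ what remains is coprime to it, so B\'ezout gives at most $dD$ common zeros per subinterval. Summing over the $O(N/L)$ subintervals and optimizing the pair $(L,D)$ with $D \asymp \sqrt{d \log N / \log\log N}$ produces the claimed bound $N^{1/d} \exp\!\bigl(12\sqrt{d \log N \log\log N}\bigr)$. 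The hypothesis $N \ge \exp(d^6)$ is precisely what allows the parameters to be chosen simultaneously and what guarantees that the lower-order terms in the Puiseux-Vandermonde analysis are absorbed into the exponential factor.

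The hardest step will be the refined determinant estimate in the second paragraph: pushing $|\Delta|$ below $1$ requires carefully combining the Puiseux parametrization of a local branch of $\mathcal{C}$ with a Vandermonde-type factorization, and the precise shape of the resulting bound is what dictates both the short-arc length $L$ and the ultimate saving in the main exponent. A secondary obstacle is ensuring that the auxiliary polynomial produced on each arc is genuinely informative --- i.e.\ does not contain $\mathcal{C}$ as a component --- which is exactly the point at which absolute irreducibility is invoked and without which B\'ezout's theorem would not give any useful conclusion.
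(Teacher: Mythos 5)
This statement is not proved in the paper at all: it is cited verbatim as Theorem~5 of Bombieri--Pila's 1989 paper, so there is no ``paper's own proof'' to compare against. Your sketch is, however, a faithful high-level outline of the actual Bombieri--Pila argument: the determinant method with monomials of degree $\le D$, the observation that an integer Vandermonde-type determinant which is forced below $1$ in absolute value must vanish, the Puiseux/Taylor expansion along a local branch of $\mathcal{C}$ on a short arc to produce that cancellation, the covering of $[0,N]$ by $O(N/L)$ short subintervals, and finally B\'ezout plus absolute irreducibility to bound the intersection with each auxiliary curve by $dD$. That is exactly the mechanism in the original paper, and the parameter choice $D \asymp \sqrt{d\log N/\log\log N}$ is the one that produces the $\exp\bigl(O(\sqrt{d\log N \log\log N})\bigr)$ factor. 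Two small caveats worth noting if you were to fill in the details: first, the subtle point is not merely that the auxiliary polynomial $F$ exists but that one can take the \emph{same} $F$ for \emph{all} integer points in a given arc (not just the $M$ used to build the determinant), which requires the standard ``extend to a maximal linearly independent set'' argument; second, obtaining the explicit constant $12$ and the admissible range $N \ge \exp(d^6)$ requires rather delicate bookkeeping in the Puiseux--Vandermonde estimate, which your sketch waves past. Since the paper treats this as a black box, that level of detail is not expected here, and your outline is an accurate reconstruction of the source proof rather than a genuinely different route.
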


\begin{lem}[Integral Points on Quadratic Curves {\cite[Proposition 1]{Jacques_Quaratic}}]
\label{Quadratic_Curves}
Let $|A|,|B|,|C|,|D|,|E|,|F|\leq N^{O(1)}$ and assume that $B^2 - 4AC\neq \square$. Then, the Diophantine equation
\[
Ax^2 + Bxy + Cy^2 + Dx + Ey + F = 0 
\]
has at most $N^{o(1)}$ integral solutions with $1 \leq |x|, |y| \leq N^{O(1)}$.
\end{lem}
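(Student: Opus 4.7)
The plan is to reduce the conic $Ax^2 + Bxy + Cy^2 + Dx + Ey + F = 0$ to a Pell-type equation $v^2 - Du^2 = M$ via a linear change of variables, and then invoke classical bounds on integral solutions. First, if $A = C = 0$ then $B^2 - 4AC = B^2$ would be a square, contradicting the hypothesis; by the $x \leftrightarrow y$ symmetry I may therefore assume $A \neq 0$. Multiplying through by $4A$ and completing the square in $x$, setting $u := 2Ax + By + D$, and then multiplying by $4\Delta$ (where $\Delta := B^2 - 4AC \neq 0$) and completing the square in $y$, I arrive at
\[
v^2 - 4\Delta u^2 = M, \qquad v := 2\Delta y + \alpha,
\]
for some integers $\alpha, M$ with $|M| \leq N^{O(1)}$, where $4\Delta$ is still a non-zero non-square. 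The map $(x,y) \mapsto (u,v)$ is linear with non-zero determinant $4A\Delta$, hence injective on $\Z^2$, so it suffices to bound the number of integer pairs $(u,v)$ with $|u|, |v| \leq N^{O(1)}$ satisfying the Pell-type equation.

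In the elliptic case $\Delta < 0$, the equation cuts out an ellipse (or is empty), and the number of representations of $M$ by a binary quadratic form of discriminant $16\Delta$ is $N^{o(1)}$, via the standard correspondence between representations and ideals of norm $|M|$ in an imaginary quadratic order. In the hyperbolic case $\Delta > 0$, solutions decompose into orbits under the unit group of $\Z[\sqrt{\Delta}]$, generated by a fundamental unit $\varepsilon \geq 1 + \sqrt{\Delta} \geq 1 + \sqrt{2}$; within each orbit the number of solutions of size $\leq N^{O(1)}$ is $O(\log N / \log \varepsilon) = O(\log N)$, and the number of orbits is $N^{o(1)}$, bounded by the number of inequivalent factorizations of $M$ in $\Z[\sqrt{\Delta}]$, again via the ideal-theoretic correspondence. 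Multiplying the per-orbit bound by the orbit count yields the claim.

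The main obstacle is uniformity in the (potentially large) discriminant $\Delta$: both the representation-count in the elliptic case and the orbit-count in the hyperbolic case must be $N^{o(1)}$ regardless of $|\Delta|$. This forces the use of the divisor-style uniform $n^{o(1)}$ bound on representations of an integer by a binary quadratic form (valid across all discriminants), together with the uniform lower bound on the fundamental unit which keeps the per-orbit count logarithmic in $N$. With these uniformities in place, the bound $N^{o(1)} \cdot O(\log N) = N^{o(1)}$ follows.
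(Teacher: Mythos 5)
The paper does not prove Lemma~\ref{Quadratic_Curves} itself; it is imported verbatim from Cilleruelo--Garaev, so the relevant comparison is with the cited source. Your reduction, by completing the square twice, to a Pell-type equation $v^2 - 4\Delta u^2 = M$ with $4\Delta$ a nonzero non-square and $|M|\leq N^{O(1)}$, followed by the uniform divisor-type bound on the number of orbits of representations (via the ideal correspondence in the relevant quadratic order) and the uniform lower bound $\varepsilon\geq 1+\sqrt{2}$ on the fundamental unit to control each orbit, is the standard proof of this uniform $N^{o(1)}$ bound for conics of non-square discriminant, and is essentially the argument in the cited reference.
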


We apply Lemma \ref{Bombieri-Pila_Lemma} to curves given by the zero set of $F(x,y):=aP(x)-bP(y)$, where $a\neq b$ with $a,b\ll N^{O_{\deg P}(1)}$. Verifying that $F(x,y)=0$ is an absolutely irreducible curve for all such $a,b$ is a non-trivial task (and false in some cases); as such, we aim to show that all quadratic factors of $F(x,y)$ will be of the form in Lemma \ref{Quadratic_Curves}. This, together with Lemma \ref{Bombieri-Pila_Lemma}, will give us the appropriate saving we need to count integral points on $F(x,y)=0$ uniformly in $a,b\ll N^{O_{\deg P}(1)}$. 

\begin{lem}[Integral Points on Curves]
\label{New_BP}
Let $P\in\Z[x]$ be a polynomial with no repeated roots, let $\varepsilon>0$, and suppose $\deg P\geq 2$. Moreover assume that the coefficients of $P$ are $\ll N^{O_{\deg P}(1)}$. Then, the curve $F(x,y)=0$, defined by $F(x,y):=aP(x)-bP(y)\in\Z[x,y],$ has at most 
\[
\ll_{\varepsilon} N^{1/3+\varepsilon}
\]
integral points with $1\leq x,y\leq N$, uniformly over all such $P$ and $a,b\ll N^{O_{\deg P}(1)},$ $a\neq b$. 
\end{lem}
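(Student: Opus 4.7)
My plan is to factor $F(x,y) := aP(x) - bP(y)$ over $\Q$ (by Gauss's lemma, into primitive integer-coefficient factors) as $F = c\prod_i F_i$, each $F_i \in \Z[x,y]$ of degree $d_i \leq \deg P$ with coefficients $\ll N^{O_{\deg P}(1)}$. Since any integer zero of $F$ is a zero of at least one $F_i$, it suffices to bound the integer points in $[1,N]^2$ on each irreducible component separately and then sum over $i$, noting that the number of components is $O_{\deg P}(1)$.

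For components $F_i$ of degree $d_i \geq 3$, I would first argue that $F_i$ is absolutely irreducible (if not, an integer point on $F_i = 0$ must lie on the intersection of all distinct Galois-conjugate components, which is a finite set of size $O_{\deg P}(1)$). Then Bombieri-Pila (Lemma \ref{Bombieri-Pila_Lemma}) bounds the integer points on $F_i = 0$ by $N^{1/d_i}\exp(O(\sqrt{d_i \log N \log\log N})) \ll_\varepsilon N^{1/3 + \varepsilon}$, uniformly in $a, b$ and $P$. For components $F_i$ of degree $2$, the key task is to verify the hypothesis $B^2 - 4AC \neq \square$ of Lemma \ref{Quadratic_Curves}, which then gives $N^{o(1)}$ integer points. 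If instead $B^2 - 4AC$ is a perfect square, the quadratic form factors over $\Q$ and irreducibility of $F_i$ forces an equation of the shape $(\alpha x + \beta y + \gamma)(\alpha' x + \beta' y + \gamma') = \text{const}$ with constants $\ll N^{O_{\deg P}(1)}$, whose integer solutions are parametrized by factorizations of that constant and counted by a divisor bound, again yielding $N^{o(1)}$ points. For any linear components of $F$, direct analysis of the polynomial identity $aP(x) = bP(L(x))$ forced by such a factor shows they arise only when $P$ admits a nontrivial affine symmetry $L$ and $a/b$ equals a specific $d$-th power ratio, under which the resulting line has a very restricted slope and intercept and contributes negligibly to $[1,N]^2$.

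The main obstacle will be the degree-$2$ analysis: establishing, uniformly in $a, b$ and over the relevant family of $P$, that each irreducible degree-$2$ factor of $F$ either satisfies $B^2 - 4AC \neq \square$ (so that Lemma \ref{Quadratic_Curves} applies directly) or else takes the degenerate product-of-lines form amenable to divisor bounds. This is precisely the "non-trivial task" that the authors flag in the discussion preceding the lemma, and it is where the hypothesis that $P$ has no repeated roots plays its most essential role, by ruling out structural coincidences that could produce conic factors with many integer points. Once this case analysis is in place, combining Bombieri-Pila for the higher-degree components with Lemma \ref{Quadratic_Curves} and divisor bounds for the conic components and the linear case yields the claimed uniform $N^{1/3 + \varepsilon}$ bound.
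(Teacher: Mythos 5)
Your overall strategy matches the paper's: factor $F(x,y)=aP(x)-bP(y)$ into irreducible components, apply Bombieri--Pila (Lemma \ref{Bombieri-Pila_Lemma}) to components of degree $\geq 3$, and apply Lemma \ref{Quadratic_Curves} (splitting on whether the discriminant $B^2-4AC$ is a square) plus a divisor bound to components of degree $2$. Your handling of the degree-$\geq 3$ and degree-$2$ cases is essentially equivalent to the paper's, differing only cosmetically (you factor over $\Q$ and appeal to Galois conjugation; the paper factors over $\C$ and invokes Bezout for conjugate pairs).

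The genuine gap is in your treatment of linear components. The paper cites \cite[Lemma 2.5]{Wang-Xu} to assert that, under the hypotheses $a\neq b$ and $P$ squarefree, \emph{no} linear factors occur. You instead acknowledge that linear factors might occur and claim they ``contribute negligibly to $[1,N]^2$'' because of a ``very restricted slope and intercept.'' This is not a valid conclusion: a linear factor of the form $qy-px-r$ in lowest terms contributes roughly $N/\max(|p|,|q|)$ integer points to the box $[1,N]^2$, which is $\Theta(N)$ whenever $p,q$ are bounded and $r$ lands the line inside the box. That would demolish the claimed $N^{1/3+\varepsilon}$ bound. So you cannot afford to let linear factors survive; you must show they do not exist. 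Your own affine-symmetry observation is the right starting point, but it must be pushed to a contradiction: if $\ell(x,y)$ divides $F$, then $\ell$ cannot be of the form $\alpha x+\gamma$ (else $P$ would be constant), so it has the shape $y-\gamma'x-\delta$ and the identity $aP(x)=bP(\gamma'x+\delta)$ holds; comparing leading coefficients gives $(\gamma')^{\deg P}=a/b$, and the fact that $L(x)=\gamma'x+\delta$ permutes the \emph{distinct} roots of $P$ forces $\gamma'$ to be a root of unity, hence $\gamma'\in\{1,-1\}$ over $\Q$; both are excluded by $a\neq b$ together with the sign constraint on $a,b$ present in the applications. You correctly flag ``no repeated roots'' as essential, but it is the linear-factor elimination (not the conic case) where it is load-bearing; without it, e.g., $P(x)=x^2$ with $a/b$ a perfect square produces genuine linear factors with $\Theta(N)$ integer points.
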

\begin{proof}
    Given $F(x,y),$ consider its prime factorization $f_1(x,y)\cdots f_k(x,y)$ in $\C[x,y]$ and note that $F(x,y)=0$ if and only if $f_i(x,y)=0$ for some $i$; in particular, the number of integral points on $F(x,y)=0$ is the sum of the number of integral points on each $f_i(x,y)$. We note that none of the $f_i$'s have degree $1$; see e.g., \cite[Lemma 2.5]{Wang-Xu}. 
    
    For $f_i$ with $\deg f_i\geq 3$, there are at most $\ll_{\varepsilon}N^{1/\deg f_i + \varepsilon}\ll N^{1/3+\varepsilon}$ integral points in the desired box, by Lemma \ref{Bombieri-Pila_Lemma}. 
    
    For $f_i$ of degree $2$, we may assume, without loss of generality, that $f_i(x,y)\in\Q[x,y]$: if not, we note that $f_i$ comes with a conjugate pair\footnote{Since $F(x,y)\in\Z[x,y],$ if $f_i$ has degree $2$ and $f_i\not\in\Q[x,y]$, then there must be an $f_j$ such that $f_if_j\in\Q[x,y].$}, say $f_j$, as $F(x,y)\in\Z[x,y];$ in particular, the number of rational points on $f_i(x,y)=0$ is equal to the number of rational points on the intersection of the curves $f_i(x,y)=0=f_j(x,y)$, which is at most $4$ by Bezout's Theorem on intersections of plane curves \cite{Bezout_Intersections}. In the case where $F(x,y)$ has quadratic factors in $\Q[x,y]$, then the coefficients of this quadratic factor are $\ll N^{O_{\deg P}(1)}$ whenever $a,b\ll N^{O_{\deg P}(1)}$. To see this, note that the left hand side of $F(x,y)=f_1(x,y)\cdots f_k(x,y)$ is at most $N^{O_{\deg P}(1)}$ for $a,b\ll N^{O_{\deg P}(1)}$, $x,y\leq N$; comparing coefficients yields the desired result. Alternatively, one can iterate work of Granville on bounds of coefficients of factors of single variable polynomials \cite{Granville_FactorBounds}. If the discriminant of the quadratic factor is not a square, we apply Lemma \ref{Quadratic_Curves} and get that there are at most $\ll_{\varepsilon} N^\varepsilon$ integral points across such quadratic irreducible factors. In the case where it is a square, then the quadratic terms can be written as a product of two linear factors plus a constant term, say $\ell_1(x,y)\ell_2(x,y)+g$, with $g$ necessarily non-zero; and so, there are at most $\ll_\varepsilon N^\varepsilon$ integral points on $\ell_1(x,y)\ell_2(x,y)+g=0$ by the divisor bound and we are done.
\end{proof}

The final ingredient we need is a result on the density of squarefree values of integer polynomials. We remark that is not known whether a general polynomial with integer coefficients takes squarefree values infinitely-often, although this is known for quadratic polynomials \cite{Ricci_Quadratic_SF} and for polynomials which are a product of (distinct) linear factors. We remark that the latter result does not seem to be explicitly stated in the literature (although it follows from \cite{Klurman_2017}), so we provide an explicit statement and proof here, for future reference. For more general polynomials, we have conditional results by work of Granville \cite{Granville-twists}, which assumes the $abc$-conjecture.

\begin{lem}[Density of Squarefree Polynomial Values]
    \label{Density_SF}
    Let $P\in\Z[x]$ be an irreducible quadratic polynomial or a polynomial of arbitrary degree which factors into distinct linear factors over $\Z$. If there is no prime $p$ for which $p^2\mid P(n)$ for all $n\in\N$, then there exists a constant\footnote{The constant $\kappa_P$ can of course be computed explicitly; indeed, it is given by an Euler product, which one can also compute heuristically, using the usual sieve-theoretic/probabilistic arguments.} $\kappa_P>0$ such that
\begin{align}
\label{SF_Count_eq}
\#\{n\leq N: \text{$P(n)$ is squarefree}\}
=
\kappa_P N + O_P\left(N^{3/4}\right),
\end{align} 
as $N\rightarrow\infty$.
\end{lem}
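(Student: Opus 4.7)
My plan is to proceed by Möbius inversion. Writing
\[
\#\{n \leq N : P(n) \text{ squarefree}\} = \sum_{d \geq 1} \mu(d)\, A_d(N), \qquad A_d(N) := \#\{n \leq N : d^2 \mid P(n)\},
\]
I split at a parameter $y$. For $d \leq y$, Lemma \ref{Lagrange_Prime} combined with the Chinese Remainder Theorem yields
\[
A_d(N) = \frac{\rho(d^2)}{d^2} N + O(\rho(d^2)),
\]
where $\rho(d^2) = \prod_{p \mid d} \rho(p^2) \ll_P (\deg P)^{\omega(d)}$, using $\disc(P) \neq 0$ (since $P$ has distinct roots). Extending the resulting Dirichlet series to all $d$ produces the main term $\kappa_P N$ with $\kappa_P = \prod_p (1 - \rho(p^2)/p^2) > 0$ by admissibility, plus an accumulated error of $O_P(N y^{-1+\varepsilon} + y^{1+\varepsilon})$.

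The substance of the proof is bounding the tail $T(y, N) := \sum_{d > y,\, d \text{ squarefree}} A_d(N)$, for which the Lagrange bound alone is insufficient, and I handle the two cases separately. In the linear-factor case $P(x) = a \prod_i (x - \alpha_i)$, any squarefree $d$ with $d^2 \mid P(n)$ and whose prime factors exceed some $P$-dependent constant admits a unique coprime decomposition $d = d_1 \cdots d_k$ with $d_i^2 \mid n - \alpha_i$. Decompositions in which some $d_i$ exceeds $\sqrt{N + |\alpha_i|}$ force $n = \alpha_i$, contributing $O_P(1)$ solutions in total; for the remaining decompositions, I combine the CRT count $N/d^2 + O(1)$ with standard sieve bounds for the simultaneous squarefreeness of $k$ linear forms to obtain $T(y, N) \ll_P N^{3/4}$ after an appropriate choice of $y$.

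In the irreducible quadratic case, completing the square reduces $d^2 \mid P(n)$ to the congruence $u^2 \equiv \disc(P) \pmod{d^2}$ with $u := 2an + b$ of size $O_P(N)$. For $d \leq \sqrt{N}$ I apply Lemma \ref{Lagrange_Prime} directly; for $d > \sqrt{N}$, each residue class modulo $d^2$ contributes at most one admissible $u$, so the remainder reduces to counting pairs $(u, d)$ with $d^2 \mid u^2 - \disc(P)$ and $|u| \leq O_P(N)$. Parameterizing these as integral points on the family of hyperbolas $u^2 - d^2 t = \disc(P)$ and applying Lemma \ref{Quadratic_Curves} (whose hypothesis is met since $\disc(P)$ is not a square, by irreducibility) gives the requisite $N^\varepsilon$ saving, which together with the estimate from $d \leq \sqrt{N}$ keeps the total tail below $O_P(N^{3/4})$.

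Balancing $y$ (e.g., $y = N^{1/2}$) and absorbing $N^\varepsilon$ factors into the $P$-dependent constant yields the claimed $O_P(N^{3/4})$ error. The main obstacle is precisely the large-$d$ tail: in the linear-factor case one must combine the coprime decomposition with sieve bounds on the simultaneous squarefreeness of the shifts $n - \alpha_i$, while in the irreducible quadratic case the crux is to leverage the non-squareness of $\disc(P)$ through Lemma \ref{Quadratic_Curves} in order to control the integral points on the relevant hyperbolas.
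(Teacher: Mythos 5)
Your Möbius-inversion framework is a genuinely different route from the paper, which simply reduces to two external results: Ricci's theorem (via Rudnick's exposition) for irreducible quadratics, and Tsang's theorem on simultaneous squarefree values of linear forms, after a preliminary reduction to residue classes modulo a fixed $q$ that restores pairwise coprimality of the $\ell_i(n)$. Your plan is a more hands-on sieve and is not misguided in outline, but both branches of your tail estimate have real gaps.

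In the quadratic branch, your split at $d>\sqrt N$ is too low. Writing $u^2-\disc(P)=d^2 t$ with $|u|\ll_P N$, the auxiliary parameter satisfies $t\ll N^2/d^2$, so with $d>\sqrt N$ you only get $t\ll N$. Lemma \ref{Quadratic_Curves} controls the points on each conic $u^2-td^2=\disc(P)$ (as a curve in the variables $u,d$ with $t$ fixed), giving $\ll N^{o(1)}$ per value of $t$; summed over $t\ll N$ this is $\ll N^{1+o(1)}$, no saving at all. To make the argument close you need the split at $d>N^{2/3}$, which forces $t\ll N^{2/3}$ and then yields $\ll N^{2/3+o(1)}$, while the intermediate range $\sqrt N<d\leq N^{2/3}$ is handled by the Lagrange/divisor bound. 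Separately, your invocation of the hypothesis of Lemma \ref{Quadratic_Curves} is misplaced: the quantity that must fail to be a square is $B^2-4AC=4t$, \emph{not} $\disc(P)$; when $t$ is a perfect square the quadratic part factors over $\Q$ and one falls back on the divisor bound using $\disc(P)\neq0$, but the reasoning as you have written it does not track the correct discriminant.

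In the linear-factor branch, the phrase ``standard sieve bounds for the simultaneous squarefreeness of $k$ linear forms'' is doing all the work and is essentially the statement of Lemma \ref{Density_SF} for linear factors -- it is exactly Mirsky/Tsang, which is what the paper cites. If you instead fill the gap elementarily via the pigeonhole on the coprime decomposition (some $d_i>y^{1/k}$ whenever $\prod d_i>y$, then sum $N/d_i^2+O(1)$ over $y^{1/k}<d_i\leq\sqrt{N+O_P(1)}$), the tail is $\ll N^{o(1)}\left(Ny^{-1/k}+\sqrt N\right)$, and balancing against the main-term error $\ll y^{1+o(1)}$ gives $y=N^{k/(k+1)}$ and a total error $\ll N^{k/(k+1)+o(1)}$. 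This equals $N^{2/3+o(1)}$ for $k=2$ but is already $N^{3/4+o(1)}$ (not $O(N^{3/4})$) for $k=3$, and exceeds $N^{3/4}$ for every $k\geq4$. So your argument does not achieve the claimed exponent uniformly in $k$; Tsang's theorem uses a genuinely sharper sieve to get $O_k(N^{3/4})$ for all $k$. (The resulting power saving from your argument would still be sufficient for the downstream application in Proposition \ref{counting_squares}, but it does not establish the lemma as stated.)
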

\begin{proof}
    For the quadratic case, see either \cite{Ricci_Quadratic_SF} or the expository note of Rudnick \cite{Rudnick_Quadratic_SF}. We note that the error term for the quadratic case is actually $\ll_P N^{2/3}\log N$, but we will not make use of this extra saving.
    
    For the case where $P$ factors into a product of linear factors, the result follows almost immediately from work of Mirsky \cite{10.1093/qmath/os-18.1.178} or Tsang \cite[Theorem 3]{Tsang_HL} on correlations of squarefree integers. See also the thesis of Mennema \cite{Mennema_HL}, which improves on the error and implied constants in \cite{Tsang_HL}. As the result we are looking for does not seem to be explicitly available in the literature, we provide the details below.
    
    Suppose $\ell_1,\dots,\ell_k$ are arbitrary linear forms in $\Z[x]$, such that there are no primes $p$ for which $p^2\mid \ell_i(n)$ for all $n\in\N$ and for any $i=1,\dots,k$. Theorem 3 of \cite{Tsang_HL} tells us that
    \begin{align}
    \label{Tsang}
    \sum_{n\leq N}\mu^2(\ell_1(n))\cdots\mu^2(\ell_k(n)) = c_{\{\ell_1,\dots,\ell_k\}}N + O_{\{\ell_1,\dots,\ell_k\}}\left(N^{3/4}\right),
    \end{align}
    for some (explicit) constant $c_{\{\ell_1,\dots,\ell_k\}}>0$ and where the implied constant in the error term depends on both the degree and the coefficients of the polynomial $\prod_{i=1}^k \ell_i$. Next, let $q$ be the squarefree part of $\prod_{i\neq j}\prod_{n}\gcd(\ell_i(n),\ell_j(n))$ and note that $q$ is finite, for if $p\mid \ell_i(n),\ell_j(n)$, with $\ell_i(x)=a_ix+b_i$ say, then $p$ necessarily divides\footnote{We have that $a_ib_j-a_jb_i\neq 0$, as $P$ is such that there is no prime $p$ for which $p^2\mid P(n)$ for all $n\in\N$.} $a_ib_j-a_jb_i(\neq 0)$.  Thus, if $P(x)=(a_1x+b_1)\cdots (a_kx+b_k)\in\Z[x]$ is admissible, then
    \begin{align*}
        \#\{n\leq N: \text{$P(n)$ is squarefree}\}
        &=
        \sum_{n\leq N}\mu^2(P(n))\\
        &=
        \sum_{m\Mod{q}}\sum_{\substack{{n\leq N}\\{n\equiv m\Mod{q}}}}\mu^2((a_1n+b_1)\cdots (a_kn+b_k))\\
        &=
        \sum_{\substack{{m\Mod{q}}}}^*\sum_{\substack{{n\leq N}\\{n\equiv m\Mod{q}}}}\mu^2(a_1n+b_1)\cdots \mu^2(a_kn+b_k),       
    \end{align*}
    where we have used the fact that for all $m\Mod{q}$ and all $i\neq j$, either $(\ell_i(n),\ell_j(n))=1$ for all $n\equiv m\Mod{q}$ or $(\ell_i(n),\ell_j(n))>1$ for all $n\equiv m\Mod{q}$. To see this, note that if $m$ and $n$ are such that $(\ell_i(n),\ell_j(n))>1$ with $n\equiv m\Mod{q}$, then there exists a prime $p$ such that $p\nmid q$, by definition of $q$. Furthermore,  $n\equiv -b_ia_i^{-1} \Mod{p}$, say, as at least one of $a_i,a_j$ must not be divisible by $p$, by the assumption that $P$ is admissible. Hence, for any other $n_0\equiv m \Mod{q}$, we have that $n_0\equiv m \Mod{p}$ with $m\equiv -b_ia_i^{-1}\Mod{p}$, so that $(\ell_i(n_0),\ell_j(n_0))>1$ and we are done. 
    In particular,
    \begin{align*}
        \#\{n\leq N: \text{$P(n)$ is squarefree}\}
        &=
        \sum_{m\Mod{q}}^*
        \sum_{d\leq \frac{1}{q}(N-m)}
        \mu^2(a_1(m+dq)+b_1)\cdots \mu^2(a_k(m+dq)+b_k),
    \end{align*}
    where we have written $n=m+dq$. We now apply Tsang's (\ref{Tsang}) to obtain (\ref{SF_Count_eq}) with $\kappa_P$ equal to $\sum_{m\Mod{q}}^*\frac{1}{q}c_{\{\ell_1,\dots,\ell_k\}}$ and where $c_{\{\ell_1,\dots,\ell_k\}}$ is as in (\ref{Tsang}) with $\ell_i(x):=a_iqx + (a_im+b_i)$.
\end{proof}

We now have all the tools required to prove our main technical result (Proposition \ref{counting_squares}), from which we will easily deduce Theorem \ref{main_thm}.
\section{Products of Polynomials and Searching for Squares}
\label{main_section}

As we will shortly see, Lemma \ref{McLeish_CLT} reduces Theorem \ref{main_thm} to understanding the fourth moment of the complete partial sums $M_N=\sum_{n\leq N}f(P(n))$. As we have already seen, the second moment of $M_N$ has arithmetic significance, in that $\E\left[M_N^2\right]$ counts the number of $n\leq N$ for which $P(n)$ is squarefree. The same is true for the fourth moment, namely
\begin{align*}
    \E\left[M_N^4\right]
    &=
    \sum_{n_1,n_2,n_3,n_4\leq N}\E\left[f(P(n_1))f(P(n_2))f(P(n_3))f(P(n_4))\right]\\
    &=
    \#\{n_1,n_2,n_3,n_4\leq N: P(n_1)P(n_2)P(n_3)P(n_4)=\square\text{ with $P(n_i)$ squarefree}\}.
\end{align*}
Due to the symmetry present in the problem, and the belief that the distribution of these random variables is indeed normal, we expect that the only contribution to the above comes from the trivial solutions, where $n_i=n_j$ in pairs. The goal in this section is to prove that this indeed the case.

\begin{prop}
\label{counting_squares}
    Let $\varepsilon>0$ and let $P\in\Z[x]$ be an irreducible quadratic polynomial or a polynomial which factors into distinct linear factors over $\Z$. If there is no prime $p$ for which $p^2\mid  P(n)$ for all $n\in\N$, then there exists a constant $\delta_P>0$ such that
    \begin{align}
        \sum_{n_1,n_2,n_3,n_4\leq N}\one_{P(n_1)P(n_2)P(n_3)P(n_4)=\square}^{\text{SF}}
        &=
        3\kappa_P^2 N^2 + O_{P,\varepsilon}(N^{2-\delta_P+\varepsilon}),
    \end{align}
    as $N\rightarrow \infty$, where $\kappa_P$ is given by (\ref{Density_SF}) and where $\one_{P(n_1)P(n_2)P(n_3)P(n_4)=\square}^{\text{SF}}$ is the indicator function on when the product $P(n_1)P(n_2)P(n_3)P(n_4)$ is a perfect square, with each factor $P(n_i)$ being squarefree.
\end{prop}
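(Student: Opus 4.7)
The plan is to split $\sigma_4(N)$ into a diagonal contribution, yielding the main term, and an off-diagonal remainder. For the diagonal, I would enumerate the three pairings of $\{1,2,3,4\}$ into two pairs; each pairing automatically makes $P(n_1)P(n_2)P(n_3)P(n_4)$ a perfect square and, under the squarefree condition, is counted by $(\kappa_P N + O(N^{3/4}))^2$ via Lemma \ref{Density_SF}. The pairwise and triple intersections of these three pairings all reduce to the set $\{n_1=n_2=n_3=n_4\}$ of size $O_P(N)$, so inclusion--exclusion gives the diagonal contribution $3\kappa_P^2 N^2 + O_P(N^{7/4})$.

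For the off-diagonal, I would first dispose of configurations with some but not all indices coinciding. For instance, if $n_1 = n_2$ without a valid full pairing, the square condition forces $P(n_3)P(n_4) = \square$ with both factors squarefree, hence $P(n_3) = P(n_4)$; since $P$ has no repeated roots and is either quadratic or a product of linear factors, it is injective on $[x_0(P), \infty)$, forcing $n_3 = n_4$ up to $O_P(1)$ exceptions. Summing yields $O_P(N)$ from such partial collisions, reducing the task to bounding the count $T(N)$ of quadruples $(n_1,n_2,n_3,n_4)$ with all four $n_i$ distinct, each $P(n_i)$ squarefree, and $P(n_1)P(n_2)P(n_3)P(n_4)=\square$.

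To control $T(N)$, I would recast the square condition as the Diophantine equation
\[
g_{34}^2\, P(n_1)\, P(n_2) \;=\; g_{12}^2\, P(n_3)\, P(n_4),
\]
where $g_{12} = \gcd(P(n_1), P(n_2))$ and $g_{34} = \gcd(P(n_3), P(n_4))$, and then decompose dyadically on $g_{12}$ and $g_{34}$. For each dyadic range and each fixed pair $(n_1, n_3)$, the equation defines a curve $aP(x) - bP(y) = 0$ in $(n_2, n_4)$ with coefficients $a = g_{34}^2 P(n_1)$, $b = g_{12}^2 P(n_3)$ of polynomial size; Lemma \ref{New_BP} (along with Lemma \ref{Quadratic_Curves} for the quadratic factors) bounds the integral points on such a curve by $O_\varepsilon(N^{1/3+\varepsilon})$, while Lemma \ref{Lagrange_Prime} controls the number of $(n_1, n_3)$ compatible with $g_{12} \mid P(n_1)$ and $g_{34} \mid P(n_3)$. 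The main obstacle is that a direct summation of these estimates over the dyadic ranges only yields $T(N) \ll N^{2+o(1)}$, not the polynomial saving we need; overcoming this is precisely the role of the bootstrapping argument. The structural hypothesis on $P$ is decisive here: for products of linear factors, common primes of $P(n_1)$ and $P(n_2)$ must divide one of the bounded quantities $\alpha_i - \alpha_j + (n_1 - n_2)$, sharply restricting the distribution of large gcds; for irreducible quadratics, the non-square discriminant prevents the conic $aP(x) - bP(y) = 0$ from factoring degenerately over $\Q$. Iteratively feeding a preliminary bound on $T(N)$ back into the dyadic analysis, and using these structural inputs to handle the rare large-gcd configurations, should then deliver the polynomial saving $\delta_P > 0$.
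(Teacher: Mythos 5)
Your diagonal count and the reformulation $g_{34}^2\,P(n_1)P(n_2)=g_{12}^2\,P(n_3)P(n_4)$ are correct, but the heart of the argument is not carried out, and the one quantitative claim you make along the way is wrong. Fixing $(n_1,n_3)$ together with divisors $g_{12}\mid P(n_1)$, $g_{34}\mid P(n_3)$ and counting $(n_2,n_4)$ on the curve $g_{34}^2P(n_1)P(x)-g_{12}^2P(n_3)P(y)=0$ does \emph{not} give $T(N)\ll N^{2+o(1)}$: there are $\asymp N^{2}$ choices of $(n_1,n_3)$, each with $N^{o(1)}$ divisor choices, and Lemma~\ref{New_BP} only furnishes $N^{1/3+\varepsilon}$ integral points per curve once $\deg P\ge 3$, so this scheme lands at $N^{7/3+\varepsilon}$ (and even in the irreducible quadratic case, where Lemma~\ref{Quadratic_Curves} gives $N^{o(1)}$ per curve, the resulting $N^{2+o(1)}$ is still not a power saving). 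You also misattribute the role of the structural hypothesis on $P$: the non-square discriminant of the conic is what makes Lemma~\ref{Quadratic_Curves} applicable inside Lemma~\ref{New_BP}; it has nothing to do with the bootstrapping step. What the hypothesis actually buys in the bootstrapping is the elementary fact that every prime factor of $\gcd(P(n_1),P(n_2))$ is $\ll_P N$ when $n_1\ne n_2$ (for linear products because each prime divides some $\ell_j(n_i)\ll N$; for quadratics because it divides $(n_1-n_2)$ or $a(n_1+n_2)+b$), so any $\gcd\gg_P N$ admits a composite divisor $d_0$ with $D< d_0\ll\max\{D^2,N\}$, forcing $\ll N^{2+\varepsilon}/D$ such pairs. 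That, followed by the $N^{1/3+\varepsilon}$ curve bound for $(n_3,n_4)$, handles \emph{only} the large-GCD regime.

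The genuine gap is the small-GCD regime $d_1,d_2\le D$, where nearly all the work in the paper lives and which your outline does not address at all. There one cannot win by fixing two of the four variables and counting on a curve. The paper instead uses Lemma~\ref{Lagrange_Prime} to parametrize $n_i=n_{0,i}+d_jt_i$, passes to the auxiliary polynomials $Q_i(t_i)=P(n_{0,i}+d_jt_i)/d_j$ so that the equation becomes $Q_1(t_1)Q_2(t_2)=Q_3(t_3)Q_4(t_4)$, and then performs two further splits: one according to whether $\max_i Q_i(t_i)$ exceeds a threshold $T$, and one according to whether $\gcd\bigl(Q_4(t_4),Q_j(t_j)\bigr)$ is smaller or larger than $Q_4(t_4)/\lambda$. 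It is the joint optimisation over the three parameters $(D,T,\lambda)$ — not the bootstrapping alone — that produces the polynomial saving $\delta_P>0$. Without some version of this analysis, the proposal does not get below $N^2$.
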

\begin{proof}
Note that if $n_1=n_2$, say, and $P(n_1)P(n_2)P(n_3)P(n_4)=\square$ with each $P(n_i)$ squarefree, then necessarily $n_3=n_4$. Thus, the main term is $3\kappa_P^2N^2$ by Lemma \ref{Density_SF}. It remains to show that the remaining terms contribute $o(N^2)$, with the appropriate power saving. 

We follow the general proof strategy from \cite{Wang-Xu}, where they in particular count the number of $m_1,m_2,n_1,n_2\leq N$ for which $P(m_1)P(m_2)=P(n_1)P(n_2)$. The main idea is to fix one of the variables and use either divisor bounds for polynomial congruences (when the polynomial values do not share many common prime factors), or general point counting results for curves of the form $aP(x)-bP(y)$ (when the polynomial values do share a large common divisor). Note that 
\[
    P(n_1)P(n_2)P(n_3)P(n_4)=\square
\]
may be transformed into an equation resembling the fourth moment equation coming from the Steinhaus case as follows. Given that each $P(n_i)$ is squarefree, 
we have $P(n_1)P(n_2)P(n_3)P(n_4)=\square$ if and only if $P(n_1)P(n_2)=\mu d_1^2$ and $P(n_3)P(n_4)=\mu d_2^2$, where $\mu$ is squarefree and $d_1=(P(n_1),P(n_2)),d_2=(P(n_3),P(n_4)).$ Thus
\[
 \sum_{n_1,n_2,n_3,n_4\leq N}\one_{P(n_1)P(n_2)P(n_3)P(n_4)=\square}^{\text{SF}}
 =
  \sum_{n_1,n_2,n_3,n_4\leq N}\one_{P(n_1)P(n_2)/d_1^2=P(n_3)P(n_4)/d_2^2}^{\text{SF}}.
\]

The rest of the proof is split as follows. For small $d_1,d_2$, we ignore the fact that these $d_i$'s come from GCDs of polynomials and use the fact that $d_i$ is simply a divisor of a certain polynomial value. This reduces matters to counting solutions of $aP(x)-bP(y)=0$ for ranging $a,b$, which is winning via Bombieri-Pila (Lemmas \ref{Bombieri-Pila_Lemma}, \ref{Quadratic_Curves}, \ref{New_BP}). For larger $d_1,d_2$, we use the fact that our admissible polynomials are such that they possess smaller divisors, giving a better control on the number of roots of the polynomial congruence equations coming from the correspsonding divisibility conditions. 

Care must be taken in the above case work, as to ensure that the trivial solutions are removed in each step.

To be more precise, we wish to obtain an upper bound for the following sum (with trivial solutions set to the side), which we split into two parts (corresponding to small GCDs and large GCDs, respectively), for some parameter $D$ to be optimized later:
    \begin{align}
    \label{off-diagonal sum}
         \sum_{\substack{{n_1,n_2,n_3,n_4\leq N}\\{d_1,d_2\leq D}}}\one_{P(n_1)P(n_2)P(n_3)P(n_4)=\square}^{\text{SF}}
         +
         \sum_{\substack{{n_1,n_2,n_3,n_4\leq N}\\{\min\{d_1,d_2\}>D}}}\one_{P(n_1)P(n_2)P(n_3)P(n_4)=\square}^{\text{SF}},
    \end{align}
    where emphasize that the first sum is over all $n_1,n_2,n_3,n_4\leq N$ for which $d_1=(P(n_1),P(n_2))$ and $d_2=(P(n_3),P(n_4))$ are $\leq D$ and similarly for the second sum.

\subsection{Small $d_1,d_2$}
 The total contribution to (\ref{off-diagonal sum}) for $d_i\leq D$ is bounded above by
        \begin{align}
            \sum_{d_1,d_2\leq D}
            \sum_{\substack{{n_1,n_2,n_3,n_4\leq N}\\{d_1\mid P(n_1),P(n_2)}\\{d_2\mid P(n_3),P(n_4)}}}\one_{d_2^2P(n_1)P(n_2)=d_1^2P(n_3)P(n_4)}^{\text{SF}},
        \end{align}
        where we have trivially dropped the fact that the $d_i$'s are GCDs of some polynomials and are merely using the fact that they are divisors of the appropriate polynomials. To bound the above, we parametrize the solutions to $P(x)\equiv 0\Mod{d_i}$, by noting that there are $\ll_{P,\varepsilon}N^{\varepsilon}$ such solutions with $x \leq d_i$ by the first part of Lemma \ref{Lagrange_Prime}. In particular, we have
        \begin{align}
        &\sum_{d_1,d_2\leq D}
            \sum_{\substack{{n_1,n_2,n_3,n_4\leq N}\\{d_1\mid P(n_1),P(n_2)}\\{d_2\mid P(n_3),P(n_4)}}}\one_{d_2^2P(n_1)P(n_2)=d_1^2P(n_3)P(n_4)}^{\text{SF}}\\
            &\leq
            \sum_{d_1,d_2\leq D}
            \sum_{\substack{{n_{0,1},n_{0,2}\Mod{d_1}}\\{n_{0,3},n_{0,4}\Mod{d_2}}}}
            \sum_{\substack{{t_1,t_2\leq N/d_1}\\{t_3,t_4\leq N/d_2}}}\one_{Q_1(t_1)Q_2(t_2)=Q_3(t_3)Q_4(t_4)}^{\text{SF}},
        \end{align}
        where the sum over $n_{0,i}$ is over all $n_1,n_2$ modulo $d_1$ and all $n_3,n_4$ modulo $d_2$ for which $P(n_1),P(n_2)\equiv 0\Mod{d_1}$ and $P(n_3),P(n_4)\equiv 0\Mod{d_2},$ respectively, and where
        \begin{align}
            &Q_1(t_1) = Q_{n_{0,1},d_1}(t_1):=\frac{1}{d_1}P(n_{0,1}+d_1t_1)\\
            &Q_2(t_2) = Q_{n_{0,2},d_1}(t_2):=\frac{1}{d_1}P(n_{0,2}+d_1t_2)\\
            &Q_3(t_3) = Q_{n_{0,3},d_2}(t_3):=\frac{1}{d_2}P(n_{0,3}+d_2t_3)\\
            &Q_4(t_4) = Q_{n_{0,4},d_2}(t_4):=\frac{1}{d_2}P(n_{0,4}+d_2t_4).
        \end{align}
        We note that $Q_i\in\Z[t_i]$ for all $i$, with $(Q_1(t_1),d_1)=(Q_2(t_2),d_2)=1$ by the squarefreeness condition (and similarly for $Q_3,Q_4$).

        We further split the sum over $t_i$ into two cases, depending on whether or not $\max_i\{Q_i(t_i)\}>T,$ for some parameter $T$ to be optimized later. We note that the total contribution for $t_i$ such $Q_i(t_i)$ is small ($\leq T$) is
        \begin{align}
            \ll_{P,\varepsilon} D^2\cdot N^\varepsilon\cdot (DT)^{\frac{4}{\deg P}},
        \end{align}
        which follows from the fact that
        \[
        \#\{t_i\leq N/d_{j}: Q_i(t_i)\leq T\}\ll \#\{x\leq N:P(x)\leq DT\}\ll (DT)^{\frac{1}{\deg P}},
        \]
        where we are already working under the assumption that $D,T$ will be much smaller compared to $N$. For the remaining $(t_1,t_2,t_3,t_4)$ (i.e., those for which at least one of $Q_i(t_i)>T$), we do some more case work, according to whether or not the GCD of $\max_i\{Q_i(t_i)\}$ and an appropriate $Q_j(t_j)$ is small. To this end, we may assume, w.l.o.g., that $\max_i\{Q_i(t_i)\}=Q_4(t_4)$ (the other cases being handled by symmetry). We let $\lambda$ be a parameter to be optimized and recall that we are currently considering the following:
        \begin{align}
        \label{Important_Equation}
            \sum_{\substack{{n_1,n_2,n_3,n_4\leq N}\\{d_1,d_2\leq D}}}&\one_{P(n_1)P(n_2)P(n_3)P(n_4)=\square}^{\text{SF}}\\
            &\ll_{P,\varepsilon}
             \sum_{d_1,d_2\leq D}
            \sum_{\substack{{n_{0,1},n_{0,2}\Mod{d_1}}\\{n_{0,3},n_{0,4}\Mod{d_2}}}}
            \sum_{\substack{{t_1,t_2\leq N/d_1}\\{t_3,t_4\leq N/d_2}\\{\max_i\{Q_i(t_i)\}=Q_4(t_4)>T}}}\one_{Q_1(t_1)Q_2(t_2)=Q_3(t_3)Q_4(t_4)}^{\text{SF}}
            +
            D^2N^{\varepsilon}(DT)^{\frac{4}{\deg P}}           
        \end{align}

            \subsubsection{The case when $(Q_4(t_4),Q_1(t_1)) \text{ and }(Q_4(t_4),Q_2(t_2))<Q_4(t_4)/\lambda$}
            
            We fix $t_4$ and consider $t_1,t_2$ in (\ref{Important_Equation}) such that both
            $$(Q_4(t_4),Q_1(t_1))<Q_4(t_4)/\lambda\;\;\;\&\;\;\;(Q_4(t_4),Q_2(t_2))<Q_4(t_4)/\lambda,$$
            for some $\lambda>0$ to be optimized later. It is necessary that $Q_1(t_1)Q_2(t_2)\equiv 0\Mod{Q_4(t_4)}$ and note that the number of such $t_1,t_2$ is bounded above by the number of $t_1,t_2$ such that $Q_i(t_i)\equiv 0\Mod{a_i}$ ($i=1,2$), where we run over all divisors $a_1,a_2$ of $Q_4(t_4)$ with $a_1a_2=Q_4(t_4)$. But since $(Q_4(t_4),Q_1(t_1))<Q_4(t_4)/\lambda\;\&\;(Q_4(t_4),Q_2(t_2))<Q_4(t_4)/\lambda$, we have that $a_i$ is necessarily greater than $\lambda$ (as $a_i<(Q_4(t_4),Q_i(t_i))<a_1a_2/\lambda$). Furthermore, for every such $a_i$ which is relatively prime to the leading coefficient of $Q_i$, Lemma \ref{Lagrange_Prime} yields the upper bound
            \[
            \ll_{P,\varepsilon} N^\varepsilon \left(\frac{N}{a_1d_1}+1\right)
            \left(\frac{N}{a_2d_1}+1\right),
            \]
            for the total number of such $t_1,t_2$. For the $a_i$ which are not relatively prime to the leading coefficient of $Q_i$, we need to consider primes $p\mid a_i$ for which $Q_i$ is the zero polynomial in $\F_p[x]$; for such primes, there are $p$ solutions mod $p$ and Lemma \ref{Lagrange_Prime} needs to be modified appropriately. We note that the leading coefficient of $Q_1(t_1)=\frac{1}{d_1}P(n_{0,1}+d_1t_1)$, say, is $d_1^k\alpha_P$ for some $k\geq 0$ and where $\alpha_P$ is the leading coefficient of $P$, so that we only need to consider primes dividing $d_1\alpha_P$. But the constant term of $Q_1(t_1)$ is $\frac{1}{d_1}P(n_{0,1})$ with $P(n_{0,1})\equiv0\Mod{d_1}$ and with $P(n_{0,1})$ squarefree. As such, $Q_1(t_1)$ will not be the zero polynomial for any $p\mid d_1$ and will be the zero polynomial for at most the primes dividing $\alpha_P,$ the number of which is uniformly bounded over all combinations of $d_i,n_{0,j},a_k$. Hence, the total contribution in the case where $Q_4(t_4)$ and $Q_1(t_1),Q_2(t_2)$ share a small divisor is bounded above by
            \begin{align}
                &\ll_{P,\varepsilon}
                N^{\varepsilon}
                \sum_{d_1,d_2\leq D}
                \sum_{n_{0,i}}\sum_{\substack{{t_4\leq N/d_2}\\{Q_4(t_4)>T}}}
                \sum_{\substack{{a_1,a_2>\lambda}\\{Q_4(t_4)=a_1a_2}}}
                \left(\frac{N}{a_1d_1}+1\right)
                \left(\frac{N}{a_2d_1}+1\right)\\
                &\ll_{P,\varepsilon}
                N^{\varepsilon}
                \sum_{d_1,d_2\leq D}
                \sum_{n_{0,i}}\sum_{\substack{{t_4\leq N/d_2}\\{Q_4(t_4)>T}}}
                \left(\frac{N^2}{Q_4(t_4)d_1^2}
                +\frac{N}{\lambda d_1}
                +1\right)\\
                &\ll_{P,\varepsilon}
                N^{\varepsilon}
                \sum_{d_1,d_2\leq D}
                \left(\frac{N^2}{T^{1-\frac{1}{\deg P}}d_1^2}
                +\frac{N^2}{\lambda d_1d_2}
                +\frac{N}{d_2}\right)\\
                &\ll_{P,\varepsilon}
                N^{\varepsilon}
                \left(\frac{N^2D}{T^{1-\frac{1}{\deg P}}}
                +\frac{N^2}{\lambda}
                +ND\right),
            \end{align}
            where we are working under the assumption that all parameters will be small powers of $N$.

        \subsubsection{The case when $(Q_4(t_4),Q_1(t_1))\geq Q_4(t_4)/\lambda$ or $(Q_4(t_4),Q_2(t_2))\geq Q_4(t_4)/\lambda$} 
        
        Similarly as above, we fix $t_4$ and wish to count the number of $t_1,t_2$ for which either $(Q_4(t_4),Q_1(t_1))>Q_4(t_4)/\lambda$ or $(Q_4(t_4),Q_2(t_2))>Q_4(t_4)/\lambda$. We simplify notation by writing $s_i:=Q_i(t_i)$ and, w.l.o.g.,\;we may assume that $g_{\{t_1,t_4\}}=g:=(s_1,s_4)>s_4/\lambda$. We write $s_1=gs_1^\prime$ and $s_4=gs_4^\prime$. Then both $s_1^\prime$ and $s_4^\prime$ are $\leq \lambda$ (recall that we are in the case where $s_4=Q_4(t_4)$ is maximal, so that $gs_1^\prime\leq gs_4^\prime$). Furthermore, we have
    \[
    s_4^\prime s_1-s_1^\prime s_4 = 0,
    \]
    which we rewrite as 
    \[
    s_4^\prime Q_1(t_1)-s_1^\prime Q_4(t_4)=0,
    \]
where we will think of $s_1^\prime$ and $s_4^\prime$ as constants up to $\lambda$. From Lemma \ref{New_BP}, we know that there are at most $\ll_\varepsilon N^{\frac{1}{3} + \varepsilon}$ such $t_1,t_4$ uniformly for $s_1^\prime,s_4^\prime$, unless $s_1^\prime=s_4^\prime$. But the latter corresponds to $t_1=t_4$ and we have already removed these trivial solutions from consideration. Hence, the total contribution to (\ref{Important_Equation}) in this case is bounded above by
    \begin{align}
    \label{large_gcd}
        &\ll_\varepsilon 
        N^\varepsilon
        \sum_{d_1,d_2\leq D}
        \sum_{n_{0,i}}
        \left(\lambda^2 N^{\frac{1}{3}}
        \cdot \frac{N}{d_1}\right)\\
        &\ll_\varepsilon
        \lambda^2 N^{1+\frac{1}{3} + \varepsilon}D,
    \end{align}
where the first line follows from the fact that we have counted the number of $(t_1,t_4)$ via Bombieri-Pila and where summing over $t_2$ (which loses a factor of $N/d_1$), fixes $t_3$.

To conclude this subsection, we have that the total contribution from small $d_1,d_2$ is bounded above by
\begin{align}
    \label{small_GCD_total}
    \ll_{P,\varepsilon}
    N^{\varepsilon}
        \left(\frac{N^2D}{T^{1-\frac{1}{\deg P}}}
                +\frac{N^2}{\lambda}
                +\lambda^2N^{1+\frac{1}{3}}D    
                +
                D^2(DT)^{\frac{4}{\deg P}}
                \right).
\end{align}

\subsection{Large $d_1,d_2$}

It remains to consider large GCD. Recall the equation
\begin{equation}
\label{mainEquation}
    d_2^2 P(n_1)P(n_2) = d_1^2 P(n_3)P(n_4) 
\end{equation}
with $n_1,n_2,n_3,n_4\leq N$ and where either $d_1:=(P(n_1),P(n_2))>D$ or $d_2:=(P(n_3),P(n_4))>D$. Without loss of generality, we may assume that $d_1>D$. Our first task is to count the number of $n_1,n_2\leq N$ such that $d_1=(P(n_1),P(n_2))>D$. We will show that there are $\ll_{P,\varepsilon}N^{2+\varepsilon}/D$ such $n_1,n_2\leq N$ and then show that the number of $n_3,n_4\leq N$ remaining loses an additional $N^{\frac{1}{3}}.$ The former is accomplished by trivial divisors bounds and a bootstrapping argument; the latter is accomplished by Bombieri-Pila, in much the same way as we did before.

\subsubsection{Counting $n_1,n_2$ with $D<d_1\ll_P N$}
The number of $n_1,n_2\leq N$ such that $(P(n_1),P(n_2))=:d_1$ is larger than $D$ but $\ll_P N$ (for some implied constant to be specified later) is bounded trivially by
    \begin{align*}
        \sum_{D<d_1\ll_P N}&\Biggr(\#\{x\leq N: P(x)\equiv 0\Mod{d_1}\}\Biggr)^2\\
        &\ll_{P,\varepsilon}
        N^{\varepsilon}\sum_{D<d_1\ll_P N}\left(\frac{N}{d_1}+1\right)^2\\
        &\ll_{P,\varepsilon} \frac{N^{2+\varepsilon}}{D},
    \end{align*}
which follows from Lemma \ref{Lagrange_Prime}.

For $n_1,n_2$ with $d_1=(P(n_1),P(n_2))\gg N$, we require a bootstrapping argument, whose details we spell out below. This argument appears in a related but different context in \cite[Theorem 1.8]{Granville-twists}.

\subsubsection{Counting $n_1,n_2$ with $d_1\gg_P N$}

Suppose $d_1|P(n_1),P(n_2),$ for some $n_1,n_2\leq N$, with $d_1\gg_P N$. Recall that $P$ is a polynomial with integer coefficients, which is either a product of an arbitrary number of distinct linear factors over $\Z$ or (irreducible) of degree $2$. We claim that $d_1$ is composite. In the case where $P$ is a product of linear factors, this is clear as each prime factor of $P(n_i)$ will be $\ll_P N$. In the case where $P$ is a quadratic polynomial, say $P(x)=ax^2+bx+c$, we have that $d_1 | P(n_1)-P(n_2) = a(n_1^2 - n_2^2) + b(n_1-n_2) = (n_1-n_2)(a(n_1+n_2)+b),$ so that every prime factor of $d_1$ divides either $n_1-n_2\leq N$ or $a(n_1+n_2)+b\ll_P N$. Write $d_1=p_1p_2\cdots p_k$, where $p_1<p_2<\dots<p_k$. If there exists $p_i>D$, set $d_0:=p_i\ll_P N$; if no such $p_i$ exists, then there exists some minimal $i\leq k$ such that $d_0:=p_1p_2\cdots p_i>D$ with $d_0\leq D^2$. We have just shown that if $P((n_1),P(n_2))=d_1\gg_P N$, then there exists $D<d_0\ll_P \max\{D^2,N\}$ such that $d_0|d_1;$ as such, the number of choices for $n_1\leq N$ and $n_2\leq N$ with $d_1\gg_P N$ is certainly
\begin{equation}
    \ll_{P,\varepsilon} 
    N^{\varepsilon}
    \sum_{D< d_0\ll_P \max\{D^2, N\}}
    \left(\frac{N}{d_0} + 1\right)^2 
    \ll_{\varepsilon} \frac{N^{2+ \varepsilon}}{D},
    \end{equation}
where we are still working under the assumption that $D$ will be a very small power of $N$.

\subsubsection{Counting the remaining $n_3,n_4$}

In the previous two subsections, we have shown that the total number of $n_1,n_2\leq N$ with $(P(n_1),P(n_2))=d_1\gg_P N$ is $\ll_{P,\varepsilon}N^{2+\varepsilon}/D$. Given one of these $N^{2+\varepsilon}/D$ choices of $n_1$ and $n_2$ (which determine $d_1$), it now remains to bound
    \begin{align*}
        \sum_{n_3,n_4\leq N}\one_{P(n_3)P(n_4)=d_2^2\frac{P(n_1)P(n_2)}{d_1^2}}^{\text{SF}}.
    \end{align*}
Suppose $n_3,n_4$ are such that $\frac{1}{d_2^2}P(n_3)P(n_4)=\frac{1}{d_1^2}{P(n_1)P(n_2)}$. Then, $P(n_3)=d_2a$ and $P(n_4)=d_2b$, for some $a,b$ with $ab=\frac{1}{d_1^2}{P(n_1)P(n_2)}$. Thus, $n_3$ and $n_4$ are such that $bP(n_3)-aP(n_4)=0$; and so,
\begin{align*}
       \sum_{n_3,n_4\leq N}&\one_{\frac{P(n_3)P(n_4)}{d_2^2}=\frac{P(n_1)P(n_2)}{d_1^2}}^{\text{SF}}\\
       &\leq
       \sum_{\substack{{a,b}\\{ab=\frac{P(n_1)P(n_2)}{d_1^2}}}}
       \#\{n_3,n_4\leq N: aP(n_3)-bP(n_4)=0\}\\
       &\ll_{P,\varepsilon} N^{\frac{1}{3}+\varepsilon},
\end{align*}
where the last line follows from Lemma \ref{New_BP}, together with the fact that there are a divisor bounded number of choices for $a,b$ (with $a\neq b,$ as $n_1\neq n_2$). Hence, the total contribution for large $d_1,d_2$ is
\[
\ll_{P,\varepsilon} \frac{N^{2+\frac{1}{3}+\varepsilon}}{D}.
\]

\subsection{Picking Parameters}

It remains to choose appropriate parameters $T,D,\lambda$. The total number of non-trivial solutions is bounded above by
\[
\ll_{P,\varepsilon}
    N^{\varepsilon}
        \left(\frac{N^2D}{T^{1-\frac{1}{\deg P}}}
                +\frac{N^2}{\lambda}
                +\lambda^2N^{1+\frac{1}{3}}D    
                +
                D^2(DT)^{\frac{4}{\deg P}}
                +
                \frac{N^{2+\frac{1}{3}}}{D}
                \right).
\]
Let $d = \deg P$ and take $\delta_P =\delta = \frac{1}{2025}$, with $T=N^{\frac{d(2\delta+\frac{1}{3})}{d-1}}, D=N^{\delta+\frac{1}{3}},$ and $\lambda = N^{\delta}$; this choice of parameters yields an error of size
\[
\ll_{P,\varepsilon}
N^{2-\delta+\varepsilon},
\]
for all $d\geq 3$ and all $\varepsilon>0$. We remark that the exponent $\frac{1}{3}$ in the third and fifth error terms, which come from Lemma \ref{New_BP}, can likely be reduced to $\frac{1}{\deg P}$ with more work\footnote{One would need to show that the curves considered in the proof of Proposition \ref{counting_squares} are absolutely irreducible.}, but this improvement is not necessary for our desired application. We have also not chosen the parameters optimally: one can do better for $d\geq 4$, with an error of size $\ll_{P,\varepsilon}
N^{\frac{23}{12}+\varepsilon}$. For $d=2$, the $\frac{1}{3}$ exponents from the third and fifth error terms are not there, as there are at most $\ll_{P,\varepsilon}N^\varepsilon$ integral points of low height on genus $0$ curves (recall Lemma \ref{Quadratic_Curves}), and we can take $\delta_P = \frac{1}{4}$, $D=\lambda=N^{\frac{1}{4}}$, and $T=N$, for an error of size $\ll_{P,\varepsilon} N^{\frac{7}{4}+\varepsilon}$.
\end{proof}

We are now ready to prove Theorem \ref{main_thm}.

\section{Proof of Theorem \ref{main_thm}}
\label{Section_Proof}

With Proposition \ref{counting_squares} in tow, it is now simple to show that $(S_{p,N})_p$ satisfies the hypotheses to Lemma \ref{McLeish_CLT}. For convenience to the reader, we provide the details below. First, recall that we are considering the following random variables:
\[
M_N:=\sum_{n\leq N}f(P(n))\;\;\&\;\;M_{p,N}:=\sum_{\substack{{n\leq N}\\{P^+(P(n))=p}}}f(P(n))
\]
and their normalized counterparts:
\[
S_N:=\frac{1}{\sqrt{\E\left[M_N^2\right]}}M_N\;\;\&\;\;S_{p,N}:=\frac{1}{\sqrt{\E\left[M_N^2\right]}}M_{p,N}.
\]
We wish to verify conditions (1), (2), and (3) of Lemma \ref{McLeish_CLT}, with $k_N=\max\{p : p \mid P(j), j \leq N\}$.

Note that condition (1) of Lemma \ref{McLeish_CLT} is trivial; indeed:
\begin{align*}
    \sum_{p\leq k_N}\E\left[S_{p,N}^2\right]
    &=
    \sum_{p\leq k_N}\frac{1}{\E\left[M_N^2\right]}\E\left[M_{p,N}^2\right]\\
    &=
    \frac{1}{\E\left[M_N^2\right]}\sum_{p\leq k_N}\sum_{\substack{{n_1,n_2\leq N}\\{P^+(P(n_1))=p=P^+(P(n_2))}}}\E\left[f(P(n_1))f(P(n_2))\right]\\
    &=
    \frac{1}{\E\left[M_N^2\right]}\left(\sum_{p\leq k_N}\#\{n\leq N:\text{$P(n)$ is squarefree with $P^+(P(n))=p$}\} + O_P(1)\right),
\end{align*}
which follows from the fact that Rademacher RMFs are supported on squarefree $n$, together with the fact that $\E\left[f(m)f(n)\right]=1$ whenever $m=n$ and $0$ otherwise, for $m,n$ squarefree. The $O_P(1)$ term comes from the fact that $P(x)$ may only be injective for $x$ sufficiently large. But this last line converges (is precisely equal, for $N$ sufficiently large) to $1$, as
\begin{align*}
        \sum_{p\leq k_N}\E\left[S_{p,N}^2\right]
        &=
        \frac{1}{\E\left[M_N^2\right]}\left(\sum_{p\leq k_N}\#\{n\leq N:\text{$P(n)$ is squarefree with $P^+(P(n))=p$}\}+O_P(1)\right)\\
        &=
        \frac{1}{\E\left[M_N^2\right]}\Big(\#\{n\leq N:\text{$P(n)$ is squarefree}\}+O_P(1)\Big),
\end{align*}
with
\begin{align*}
    \E\left[M_N^2\right]
    &=
    \sum_{n_1,n_2\leq N}\E\left[f(P(n_1))f(P(n_2))\right]\\
    &=
    \#\{n\leq N: \text{$P(n)$ is squarefree}\}+O_P(1),
\end{align*}
for the same reasons as above. We note that condition (1) is satisfied for all $P\in\Z[x]$.

Similarly as above, condition (3) of Lemma \ref{McLeish_CLT} is easily verifiable:
\begin{align*}
    \sum_{\substack{{p,q\leq k_N}\\{p\neq q}}}\E\left[S_{p,N}^2S_{q,N}^2\right]
    &=
    \frac{1}{\left(\E\left[M_N^2\right]\right)^2}\sum_{\substack{{p,q\leq k_N}\\{p\neq q}}}\E\left[M_{p,N}^2M_{q,N}^2\right]\\
    &=
    \frac{1}{\left(\E\left[M_N^2\right]\right)^2}\sum_{\substack{{p,q\leq k_N}\\{p\neq q}}}\E\left[\Big(\sum_{\substack{{m,n\leq N}\\{P^+(P(m))=p,P^+(P(n))=q}}}f(P(m))f(P(n))\Big)^2\right]\\ 
    &=
    \frac{1}{\left(\E\left[M_N^2\right]\right)^2}\sum_{\substack{{p,q\leq k_N}\\{p\neq q}}}\sum_{\substack{{m_1,m_2,n_1,n_2\leq N}\\{P^+(P(m_i))=p,P^+(P(n_i))=q}}}\E\left[f(P(m_1))f(P(m_2))f(P(n_1))f(P(n_2))\right]\\ 
    &=
    \frac{1}{\left(\E\left[M_N^2\right]\right)^2}\sum_{\substack{{p,q\leq k_N}\\{p\neq q}}}\left(\sum_{\substack{{m_1,m_2,n_1,n_2\leq N}\\{P^+(P(m_i))=p,P^+(P(n_i))=q}}}\one_{P(m_1)P(m_2)P(n_1)P(n_2)=\square}^{\text{SF}}\right)\\
    &=
    1+o(1)+\frac{1}{\left(\E\left[M_N^2\right]\right)^2}\left(
    \sum_{\substack{{p,q\leq k_N}\\{p\neq q}}}\sum_{\substack{{m_1,m_2,n_1,n_2\leq N}\\{P^+(P(m_i))=p,P^+(P(n_i))=q}\\{m_1\neq m_2,n_1\neq n_2}}}\one_{P(m_1)P(m_2)P(n_1)P(n_2)=\square}^{\text{SF}}\right),
\end{align*}
where the $o(1)$ term takes into account the non-injective values of $P$. Since $\E\left[M_N^2\right]\sim \kappa_PN$, it suffices to show that 
\begin{align*}
    \sum_{\substack{{p,q\leq k_N}\\{p\neq q}}}&\sum_{\substack{{m_1,m_2,n_1,n_2\leq N}\\{P^+(P(m_i))=p,P^+(P(n_i))=q}\\{m_1\neq m_2,n_1\neq n_2}}}\one_{P(m_1)P(m_2)P(n_1)P(n_2)=\square}^{\text{SF}}\\
    &\leq
   \sum_{\substack{{m_1,m_2,n_1,n_2\leq N}\\{P^+(P(m_1))=P^+(P(m_2)), P^+(P(n_1))=P^+(P(n_2))}\\{m_1\neq m_2,n_1\neq n_2}}}\one_{P(m_1)P(m_2)P(n_1)P(n_2)=\square}^{\text{SF}}\\
    &=
    o(N^2),
\end{align*}
as $N\rightarrow\infty$. But note that this is bounded above by the number of non-trivial solutions from Proposition \ref{counting_squares}: we already have that $m_1\neq m_2$ and $n_1\neq n_2$, but if $m_1=n_1$ say, then the condition that $P(m_1)P(m_2)P(n_1)P(n_2)=\square$ with each $P(m_i),P(n_i)$ square forces $n_1=n_2$ (apart from the finitely-many cases where $P$ does not have a unique image); that is,
\begin{align*}
   \sum_{\substack{{m_1,m_2,n_1,n_2\leq N}\\{P^+(P(m_1))=P^+(P(m_2)), P^+(P(n_1))=P^+(P(n_2))}\\{m_1\neq m_2,n_1\neq n_2}}}&\one_{P(m_1)P(m_2)P(n_1)P(n_2)=\square}^{\text{SF}}\\
    &\leq
    \sum_{\substack{{m_1,m_2,n_1,n_2\leq N}\\{m_i\neq n_j}}}\one_{P(m_1)P(m_2)P(n_1)P(n_2)=\square}^{\text{SF}}\\
    &=
    o(N^2),
\end{align*}
as desired.

It remains to show that $(S_{p,N})_p$ satisfies the fourth moment condition (See Remark \ref{fourth_moment_condition}). To this end, consider the following:
\begin{align*}
    \sum_{p\leq k_N}\E\left[S_{p,N}^4\right]
    &=
    \frac{1}{\left(\E\left[M_N^2\right]\right)^2}\sum_{p\leq k_N}\sum_{\substack{{n_1,n_2,n_3,n_4\leq N}\\{P^+(P(n_i))=p}}}\E\left[f(P(n_1))f(P(n_2))f(P(n_3))f(P(n_4))\right]\\
    &=
    \frac{1}{\left(\E\left[M_N^2\right]\right)^2}\left(\sum_{p\leq k_N}\sum_{\substack{{n_1,n_2,n_3,n_4\leq N}\\{P^+(P(n_i))=p}}}\one_{P(n_1)P(n_2)P(n_3)P(n_4)=\square}^{\text{SF}}+O_P(1)\right)\\
    &\lesssim
    \frac{1}{\kappa_P^2N^2}\sum_{p\leq k_N}\sum_{\substack{{n_1,n_2,n_3,n_4\leq N}\\{P^+(P(n_i))=p}}}\one_{P(n_1)P(n_2)P(n_3)P(n_4)=\square}^{\text{SF}}.  
\end{align*}
We need to show that
\[
\sum_{p\leq k_N}\sum_{\substack{{n_1,n_2,n_3,n_4\leq N}\\{P^+(P(n_i))=p}}}\one_{P(n_1)P(n_2)P(n_3)P(n_4)=\square}^{\text{SF}}
=
o(N^2),
\]
but, by Proposition \ref{counting_squares}, we only need to consider the diagonal solutions (i.e., those $n_i$ which are equal in pairs); in particular,
\begin{align*}
    \sum_{p\leq k_N}&\sum_{\substack{{n_1,n_2,n_3,n_4\leq N}\\{P^+(P(n_i))=p}}}\one_{P(n_1)P(n_2)P(n_3)P(n_4)=\square}^{\text{SF}}\\
    &\ll
    \sum_{p\leq k_N}\Biggr(\sum_{\substack{{n\leq N}\\{P^+(P(n))=p}}}1\Biggr)^2 + o(N^2)\\
    &\ll
    \sum_{p\leq \log \log N}\Biggr(\sum_{\substack{{n\leq N^{\deg P}}\\{n:\text{ $p$-smooth}}}}1\Biggr)^2
    +
    \sum_{\log \log N<p\leq k_N}\Biggr(\sum_{\substack{{n\leq N}\\{P(n)\equiv 0\Mod{p}}}}1\Biggr)^2
    +
    o(N^2).
\end{align*}
For the smooth sum, we use the fact that there are very few $p$-smooth numbers of size $N^{\deg P}$ with $p\leq\log\log N$. Indeed, if $\psi(x,y)$ denotes the number of $y$-smooth integers up to $x$, then we know that $\psi(x,y)=xu^{-(1+o(1))u}$ uniformly in $u\leq y^{1-\epsilon}$, for any $\epsilon>0,$ where $u=\log x / \log y;$ see \cite[Corollary 1.3]{HildTen}, for example. This results in an error of size $\ll_{\varepsilon} N^\varepsilon$. For the second sum, we split into cases, depending on the size of $p$. For the primes $p\ll_P N$ we use the second divisor bound from Lemma \ref{Lagrange_Prime} which yields the upper bound $\ll \left(N^2/p^2 + N/p +1\right)^2$ on the inner sum. Summing over the small primes then yields an admissible error. For the larger primes, note that there are $\ll_P N$ primes $p\gg N$ such that $p\mid P(n)$ for some $n\leq N$ (since each such value of $P(n)$ is divisible by $\ll\deg P$ of such large primes). For each of these primes, the number of $n\leq N$ such that $P(n)\equiv 0\pmod p$ is $\ll 1$, again by Lemma \ref{Lagrange_Prime}, which yields a total error of $\ll_P N$. Thus,
\[
\sum_{p\leq k_N} \E\left[S_{p,N}^4\right] \rightarrow 0,
\]
as $N\rightarrow\infty,$ as desired, which completes the proof of Theorem \ref{main_thm}.

\section{Large Fluctuations}
\label{large_values_section}

In this final section, we prove Theorem \ref{large_values_theorem} on large fluctuations of $\sum_{n\leq N}f(n^2+1)$. We follow the strategy as in the Steinhaus case \cite[Theorem 1.3]{Oleksiy-RandomChowla}. We first 
give a detailed outline of the proof. 

\subsection{Proof Strategy} 

The key idea is to utilize the existence of many positive integers $n$ such that $n^2 + 1$ has a very large prime factor (see Lemma \ref{posProp}): if $n\leq N$ is such that $n^2+1$ has a prime factor $p$ bigger than $N\log N$, say, then this prime is the largest prime factor of $n^2+1$ and $(n^2 + 1)/p$ has all of its prime factors less than $N/\log N$; and so, we can ``pull out''\footnote{Recall that $f$ is supported on square-free integers.} $f(p)$ from $f(n^2 + 1)$ and write the initial sum as 
\begin{equation}
    \sum_{n\leq N} f(n^2 + 1) = \sum_{p\gg N\log N} f(p)\sum_{n\leq N : p\mid n^2 + 1} f\left(\frac{n^2 + 1}{p}\right) + \text{the remaining terms}.\label{eq_strategy}
\end{equation}
Conditioning on the primes $\ll N/\log N$, the above sum which, we denote by $S(N)$, may be viewed as $S(N) \approx \sum_{p\gg N\log N} f(p)c_p(N)$, where $c_P(N):=\sum_{n\leq N : p\mid n^2 + 1} f\left(\frac{n^2 + 1}{p}\right)$ are ``fixed'' coefficients. This is a weighted sum of independent and identically distributed random variables, which should have approximately Gaussian behavior, as long as the weights are not too irregular. 

The second key idea is to sample $S(N)$ at multiple, fairly spread out scales $x_1, x_2, \ldots, x_k$ with corresponding sums $S_i := S(x_i)$ for $1\leq i\leq k$. In order to work with these sums simultaneously, it becomes crucial to carry out the conditioning more carefully, for we want to avoid viewing a given $f(p)$ as random at some scale but conditioned on at a different scale. We shall sweep this detail under the rug for this outline. 

We will show that, with probability $\approx 1$ over the choices for $f(q)$ that have been conditioned on, the remaining terms in \eqref{eq_strategy} can be ignored at most scales ($\lfloor 0.99k\rfloor$ of them, say). Upon conditioning, we then have $\lfloor 0.99k\rfloor$ approximately Gaussian  sums which ``should'' behave independently from one-another, given that the scales $x_i$ are spread out. In fact, upon carrying the conditioning carefully, the sums $S_i$ will be truly independent. We package all of these sums in one multivariate distribution and compare it to the corresponding multivariate normal distribution via the following lemma.

\begin{lem}[Normal Approximation Result \cite{Harper_Fluctuations}]
\label{normApproxLemma}
    Let $m \in \N $ and let  $S\subset \N$ be finite and nonempty. Suppose that for each $1 \leq \ell\leq m$ and $h \in S$ we are given a deterministic coefficient $c(\ell,i) \in \C$. Finally, suppose that $(V_\ell)_{1\leq \ell\leq m}$ is a sequence of independent, mean zero, complex-valued random variables, and let $Y = (Y_i)_{i\in S}$ be the $\#S$-dimensional random vector with components $Y_i := \Re(\sum_{\ell=1}^m c(\ell,i)V_\ell)$. If $Z = (Z_i)_{i\in S}$ is a multivariate normal random vector with the same mean vector and covariance matrix as $Y$, then for any $u\in \R$ and any small $\eta > 0$, we have
\begin{align*}
    \mathbb{P}\left(\max_{i\in S} Y_i\leq u\right) 
    &\leq
    \mathbb{P}\left(\max_{i\in S} Z_i\leq  u + \eta\right)\\
 &\quad+
O\left(\frac{1}{\eta^2}\sum_{i,j\in S}\sqrt{
\sum_{\ell=1}^m
|c(\ell, i)|^2|c(\ell,j)|^2\E\left [|V_\ell|^4\right]}
+
\frac{1}{\eta^3}\sum_{\ell=1}^m \E\left [|V_\ell|^3\right]\left(\sum_{i\in S}|c(\ell,i)|\right)^3
\right).
\end{align*}
\end{lem}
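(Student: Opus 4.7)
The plan is to use Lindeberg's swapping method combined with a smooth approximation of the indicator $\one_{\max_{i\in S} y_i \leq u}$. I first couple $Y$ with a Gaussian avatar: take independent complex Gaussians $(W_\ell)_{1\leq \ell \leq m}$, independent of the $V_\ell$, with $\E W_\ell = 0$, $\E |W_\ell|^2 = \E|V_\ell|^2$, and $\E W_\ell^2 = \E V_\ell^2$. Then $\tilde Z_i := \Re(\sum_{\ell} c(\ell,i) W_\ell)$ is a real Gaussian vector with the same mean and covariance matrix as $Y$, hence $\tilde Z \stackrel{d}{=} Z$, and the problem becomes comparing $\E \Phi(Y)$ with $\E \Phi(\tilde Z)$ for a suitable test function $\Phi$.

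I next fix a $C^3$ function $\phi : \R \to [0,1]$ with $\phi \equiv 1$ on $(-\infty, 0]$, $\phi \equiv 0$ on $[1, \infty)$, and uniformly bounded derivatives of orders $1,2,3$. Set $F(y) := \prod_{i \in S} \phi((y_i - u)/\eta)$. Then $\one_{\max_i y_i \leq u} \leq F(y) \leq \one_{\max_i y_i \leq u + \eta}$, and mixed partial derivatives $\partial_{i_1} \cdots \partial_{i_k} F$ vanish unless $y_{i_j} \in [u, u+\eta]$ for every $j$, with size $O(\eta^{-k})$ when nonzero. This yields
\[
\mathbb{P}\bigl(\max_i Y_i \leq u\bigr) \leq \E F(Y) \leq \mathbb{P}\bigl(\max_i Z_i \leq u + \eta\bigr) + \bigl| \E F(Y) - \E F(\tilde Z) \bigr|,
\]
and reduces the proof to bounding the last quantity by the two stated error terms.

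I then telescope by swapping $V_\ell$ for $W_\ell$ one variable at a time: for $0 \leq \ell \leq m$, set $Y^{(\ell)}_i := \Re\bigl(\sum_{j \leq \ell} c(j,i) W_j + \sum_{j > \ell} c(j,i) V_j\bigr)$, so $Y^{(0)} = Y$ and $Y^{(m)} = \tilde Z$, and write
\[
\E F(Y) - \E F(\tilde Z) = \sum_{\ell=1}^m \E\bigl[F(Y^{(\ell-1)}) - F(Y^{(\ell)})\bigr].
\]
Fixing $\ell$ and conditioning on $(V_j, W_j)_{j \neq \ell}$, I Taylor-expand $F$ in the $\ell$-th coordinate around the common base point. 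The zeroth-, first-, and second-order Taylor contributions from $V_\ell$ and $W_\ell$ agree in expectation because their first and second real moments match (this is where $\E W_\ell^2 = \E V_\ell^2$ is needed), so only the third-order remainder survives the cancellation.

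The main obstacle is to extract the correct shape of the two error terms from this remainder. The $\eta^{-3}$ term is the easier one: a direct third-order Taylor bound, combined with $\|\nabla^3 F\|_\infty \ll \eta^{-3}$ and the coordinate expansion of the directional derivative in $c(\ell,\cdot)$, gives a contribution of size $\eta^{-3} \E|V_\ell|^3 (\sum_i |c(\ell,i)|)^3$, and summing in $\ell$ yields the stated term. The $\eta^{-2}$ term is more subtle and captures the tail regime where $|V_\ell|$ is large: there one replaces the third derivative with the \emph{second} derivative of $F$, whose coordinate expansion carries the quadratic-form structure $\sum_{i,j \in S} \eta^{-2} c(\ell,i) c(\ell,j) (\cdots)$, and then applies Cauchy--Schwarz in $\ell$ \emph{after} the $(i,j)$-sum, so that $\E|V_\ell|^4$ enters inside the square root while $|c(\ell,i)|^2|c(\ell,j)|^2$ remains outside the $\ell$-sum. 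The delicate point is this order of operations in Cauchy--Schwarz; getting it right is what produces the exact error shape claimed in the lemma.
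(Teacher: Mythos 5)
This lemma is quoted in the paper as a black box from Harper's work \cite{Harper_Fluctuations}; the paper gives no proof of its own, so there is nothing to compare your approach against except Harper's original argument. That said, your approach can be assessed on its own merits.

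Your Lindeberg-swap strategy is sound, and the crucial details are in place: you correctly construct the Gaussian avatar $W_\ell$ with \emph{both} $\E|W_\ell|^2 = \E|V_\ell|^2$ and $\E W_\ell^2 = \E V_\ell^2$ (this pseudo-covariance matching is exactly what is needed so that $\E[\Re(c(\ell,i)V_\ell)\Re(c(\ell,j)V_\ell)] = \E[\Re(c(\ell,i)W_\ell)\Re(c(\ell,j)W_\ell)]$, and the requisite real covariance matrix is PSD because $|\E V_\ell^2| \leq \E|V_\ell|^2$). After conditioning, the first- and second-order Taylor terms then cancel exactly, and the third-order remainder gives $\ll \eta^{-3}\sum_\ell (\E|V_\ell|^3 + \E|W_\ell|^3)(\sum_i|c(\ell,i)|)^3$. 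Since $\E|W_\ell|^3 \ll (\E|W_\ell|^2)^{3/2} = (\E|V_\ell|^2)^{3/2} \leq \E|V_\ell|^3$, this is $\ll \eta^{-3}\sum_\ell \E|V_\ell|^3(\sum_i|c(\ell,i)|)^3$. That alone establishes the lemma, and in fact proves a slightly stronger statement (the $\eta^{-2}$ error term is an \emph{additional} allowance and dropping it only improves the bound).

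Where your writeup is not convincing is the paragraph about the $\eta^{-2}$ term. The truncation argument you sketch (use a second-order bound in the large-$|V_\ell|$ regime, then Cauchy--Schwarz in $\ell$) does not produce an expression of the form $\sum_{i,j}\sqrt{\sum_\ell |c(\ell,i)|^2|c(\ell,j)|^2\E|V_\ell|^4}$: the truncated second-moment bound is $\ll \eta^{-2}(\sum_i|c(\ell,i)|)^2\E[|V_\ell|^2\one_{|V_\ell|>T}]$, and no amount of Cauchy--Schwarz in $\ell$ turns this into the stated quadratic-form-under-square-root structure. The $\eta^{-2}$ term actually has the shape of $\eta^{-2}\sum_{i,j}\sqrt{\operatorname{Var}\bigl(\sum_\ell \Re(c(\ell,i)V_\ell)\Re(c(\ell,j)V_\ell)\bigr)}$, i.e.\ it controls fluctuations of the \emph{empirical} covariance around $\operatorname{Cov}(Y_i,Y_j)$, which is what comes out of Harper's argument (essentially comparing $Y$ to a Gaussian with the random empirical covariance, then that Gaussian to $Z$). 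In your Lindeberg set-up the covariances cancel exactly so this term never arises. None of this invalidates your proof, but you should either drop the $\eta^{-2}$ discussion entirely (since the $\eta^{-3}$-only bound suffices) or be explicit that you are proving a sharper statement and that the $\eta^{-2}$ term in the quoted lemma is redundant for your argument.
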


Using the notation of the lemma, we will choose: $V_\ell$ to be, essentially, the $f(p)$ for $p\gg N\log N$; $S$ to be the set of the indices of the $\lfloor 0.99k\rfloor$ scales; $c(\ell,i)$ to be the coefficient $c_p(x_i)$ from the sentence following \eqref{eq_strategy} (where $\ell$ corresponds to $p$). Note that both the set $S$ and the coefficients $c(\ell,i)$ are indeed deterministic upon conditioning. 

Now that we have passed to the normal approximations for the $\lfloor 0.99k\rfloor$ sums, the maximum of these independent mean zero and unit variance Gaussians gets as large as roughly $\sqrt{2\log k}$ with probability $\approx 1$ over the random $f(p)$ that have not been conditioned on; this is the content of the next lemma. Choosing $k$ to be roughly $\log N$ and carrying out this process for infinitely many disjoint sets of scales gives the desired fluctuations of size $\sqrt{\log\log N}$ with probability $\approx 1$.

\begin{lem}[Normal Comparison Result \cite{Harper_Fluctuations}] \label{normComparisonLemma} Suppose that $n \geq 2$, and that $\varepsilon \geq 0$ is sufficiently small (i.e., less than a certain small absolute constant). Let $X_1,\dots, X_k$ be
mean zero, variance one, jointly normal random variables\footnote{We say that two random variables $X,Y$ are \textit{jointly normal} if $aX+bY$ is a normal random variable for all $a,b\in\R$.}, and suppose $\E[X_iX_j ]\leq \varepsilon$ whenever $i\neq j$. Then, for any $100\varepsilon \leq \delta \leq 1/100$ (say), we have
\[
\mathbb{P}\left(\max_{1\leq i\leq k} X_i\leq \sqrt{(2-\delta)\log k}\right) \leq  \e^{-\Theta(k^{\delta/20}/\sqrt{\log k})} + k^{-\delta^2/50\varepsilon}.
\]
\end{lem}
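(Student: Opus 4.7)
The plan is to pass from the generally correlated Gaussians to an \emph{exchangeable} Gaussian model with common pairwise correlation $\varepsilon$ via Slepian's inequality, then further reduce to i.i.d.\ standard normals by conditioning on a single shared factor, and finally balance a direct upper-tail analysis of the i.i.d.\ maximum against a one-dimensional Gaussian tail bound to extract the two terms in the statement.

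First I would apply Slepian's inequality. Let $(Y_1, \ldots, Y_k)$ be a centered Gaussian vector with unit variances and pairwise correlations exactly $\varepsilon$; this is a valid Gaussian since its covariance matrix $(1-\varepsilon)I + \varepsilon \mathbf{1}\mathbf{1}^{T}$ is positive definite for $\varepsilon \in [0,1)$. Since $\E[X_iX_j] \leq \varepsilon = \E[Y_iY_j]$ and variances match, Slepian gives $\mathbb{P}(\max_i X_i \leq u) \leq \mathbb{P}(\max_i Y_i \leq u)$. Next, decompose the exchangeable model as $Y_i = \sqrt{1-\varepsilon}\, Z_i + \sqrt{\varepsilon}\, W$, where $Z_1,\ldots, Z_k, W$ are i.i.d.\ standard normals. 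Conditioning on $W$ makes the $Y_i$ i.i.d.\ normal, so
\[
\mathbb{P}\bigl(\max_i Y_i \leq u\bigr) = \E_W\bigl[\Phi(v(W))^k\bigr], \qquad v(W) := \frac{u - \sqrt{\varepsilon}\, W}{\sqrt{1-\varepsilon}}.
\]

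Now I would split the expectation on $W$ at a threshold $t_0 = c\delta\sqrt{\log k}/\sqrt{\varepsilon}$ for a small absolute constant $c$ to be chosen. On the event $\{W \geq -t_0\}$, using $u = \sqrt{(2-\delta)\log k}$, $\varepsilon \leq \delta/100$, and $(1-\varepsilon)^{-1/2} = 1 + O(\varepsilon)$, one checks that $v(W) \leq \sqrt{(2-\delta')\log k}$ for some $\delta' = \Theta(\delta)$. Combining $\Phi(v)^k \leq \exp(-k(1-\Phi(v)))$ with the Mills-ratio asymptotic $1-\Phi(v) \asymp e^{-v^2/2}/v$ then yields
\[
\Phi(v(W))^k \leq \exp\!\bigl(-\Theta(k^{\delta/20}/\sqrt{\log k})\bigr),
\]
where the constant $1/20$ (instead of the ``naive'' $1/2$ one would get in the strictly independent case) absorbs the losses from the $\sqrt{1-\varepsilon}$-factor and from passing through Mills-ratio inequalities. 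On the complementary event $\{W < -t_0\}$, a one-dimensional Gaussian tail estimate gives
\[
\mathbb{P}(W < -t_0) \leq e^{-t_0^2/2} = k^{-c^2\delta^2/(2\varepsilon)},
\]
which, for the right choice of $c$, is at most $k^{-\delta^2/50\varepsilon}$. Adding the two contributions produces the bound claimed.

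The main obstacle is the careful bookkeeping of constants: one must confirm that the exchangeable-comparison plus Mills-ratio steps do not degrade the i.i.d.-Gaussian-maximum exponent beyond the stated factor $1/20$, \emph{while simultaneously} choosing the threshold $t_0$ so that the Gaussian tail $e^{-t_0^2/2}$ lands at $k^{-\delta^2/50\varepsilon}$. The argument is conceptually routine given the decomposition $Y_i = \sqrt{1-\varepsilon}Z_i + \sqrt{\varepsilon}W$, but getting clean constants requires tracking the interplay of $\varepsilon$ and $\delta$ through both regimes; the restriction $100\varepsilon \leq \delta$ is precisely what guarantees that both regimes give non-trivial bounds.
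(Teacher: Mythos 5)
The paper cites this lemma from Harper's work without reproducing a proof, so there is no in-paper argument to compare against; I'll therefore assess your proposal on its own terms. Your argument is correct and is essentially the standard route for Gaussian comparison lemmas of this kind (indeed, the same route Harper takes in \cite{Harper_Fluctuations}).

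The key steps all check out. Slepian's inequality applies because $\E[X_iX_j]\leq\varepsilon$ for $i\neq j$ and the exchangeable comparison vector $Y$ with constant off-diagonal correlation $\varepsilon\in[0,1)$ has a valid (positive-definite) covariance matrix. The one-factor decomposition $Y_i=\sqrt{1-\varepsilon}Z_i+\sqrt{\varepsilon}W$ makes the $Y_i$ conditionally i.i.d.\ given $W$, so $\mathbb{P}(\max_iY_i\leq u)=\E_W[\Phi(v(W))^k]$. Splitting at $t_0=c\delta\sqrt{\log k}/\sqrt{\varepsilon}$ with $c=1/5$: on $\{W<-t_0\}$ the one-dimensional tail gives exactly $e^{-t_0^2/2}=k^{-\delta^2/(50\varepsilon)}$; on $\{W\geq-t_0\}$, using $\sqrt{2-\delta}\leq\sqrt 2(1-\delta/4)$ and $1/\sqrt{1-\varepsilon}\leq1+\delta/50$ (since $\varepsilon\leq\delta/100$) one gets $v(W)^2\leq(2-\delta')\log k$ with $\delta'\geq 0.35\delta$, and then $\Phi(v)^k\leq e^{-k(1-\Phi(v))}$ combined with the Mills-ratio lower bound $1-\Phi(v)\gg e^{-v^2/2}/v$ yields $k(1-\Phi(v))\gg k^{\delta'/2}/\sqrt{\log k}\geq k^{\delta/20}/\sqrt{\log k}$, as claimed. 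The degenerate case $\varepsilon=0$ is handled by simply omitting the split. The only loose end is cosmetic: you should note explicitly that if $v(W)\leq1$ the bound is trivial (then $1-\Phi(v)\gg1$ so $\Phi(v)^k$ is exponentially small in $k$), so that the Mills-ratio estimate is only invoked for $v\geq1$ where it is valid. With that remark in place, the proof is complete.
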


\subsection{Localization}

As remarked by Harper \cite[Theorem 2]{Harper_Fluctuations}, it suffices to prove the existence of large fluctuations locally in order to obtain large fluctuations globally; as such, Theorem \ref{large_values_theorem} will follow from the following localized version.

\begin{lem}\label{large_values_lemma}
For $X$ sufficiently large, we have 
\begin{equation}
    \max_{N\in [X, X^{(\log X)^2}]} \frac{1}{\sqrt N}\left\lvert\sum_{n\leq N} f(n^2 + 1)\right\rvert \gg \sqrt{\log\log X}\label{local_large_val}
\end{equation}
with probability $1 - O((\log\log X)^{-1/50})$.
\end{lem}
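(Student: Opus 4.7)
The strategy, following the outline in the excerpt and \cite{Harper_Fluctuations}, is to sample the partial sums at $k \asymp \log X$ well-separated scales, perform the large-prime decomposition at each, carefully condition so the main pieces become truly independent with deterministic coefficients, and then extract the claimed large values from the maximum of the resulting multivariate near-Gaussian via Lemmas \ref{normApproxLemma} and \ref{normComparisonLemma}.

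\textit{Setup and decomposition.} I would fix a small constant $A > 1$, take $k := \lfloor\log X\rfloor$, and define scales $X = x_1 < x_2 < \cdots < x_k \leq X^{(\log X)^2}$ by $x_{i+1} := \lceil A\, x_i\log x_i\rceil$, making the prime intervals $I_i := (x_i\log x_i,\, A x_i\log x_i]$ pairwise disjoint while fitting $k$ scales comfortably inside the allowed range. At each scale,
\[
\sum_{n\leq x_i} f(n^2+1) \;=\; S_i^{\mathrm{main}} \;+\; R_i, \qquad S_i^{\mathrm{main}} := \sum_{p\in I_i} f(p)\,c_p(x_i),
\]
where
\[
c_p(x_i) := \sum_{\substack{n\leq x_i\\ p\,\|\,n^2+1\\ n^2+1\text{ sqfree}}} f\!\left(\frac{n^2+1}{p}\right),
\]
and $R_i$ collects the contribution of $n$ with $n^2+1$ squarefree but $P^+(n^2+1)\notin I_i$ (non-squarefree $n^2+1$ contribute nothing, since $f$ is supported on squarefrees).

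\textit{Conditioning and control of $R_i$.} I would condition on the $\sigma$-algebra $\mathcal{G}$ generated by $\{f(q) : q\notin \bigcup_i I_i\}$, with a careful sequential conditioning across scales (processing them in a fixed order and tracking how the coefficients of smaller-scale sums depend on randomness from larger-scale windows) so that each $c_p(x_i)$ is $\mathcal{G}$-measurable and the $(S_i^{\mathrm{main}})_i$ are truly conditionally independent given $\mathcal{G}$. For the remainders, orthogonality $\E[f(m)f(n)] = \one_{m=n,\, m\text{ sqfree}}$ gives $\E[R_i^2] \leq \kappa_P x_i$, and Chebyshev yields $\Pr[|R_i| > \frac{1}{10}\sqrt{x_i\log\log X}] \ll 1/\log\log X$. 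Markov on the count of $i$ where this fails then shows, with probability $1-O((\log\log X)^{-1/50})$, that there is a subset $\mathcal{S}\subseteq\{1,\ldots,k\}$ of size $|\mathcal{S}|\geq \lfloor 0.99 k\rfloor$ on which $|R_i| \leq \frac{1}{10}\sqrt{x_i\log\log X}$, small enough to be absorbed into the eventual lower bound.

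\textit{Normal approximation and normal comparison.} The coefficient variance $V_i := \sum_{p\in I_i} c_p(x_i)^2$ satisfies
\[
\E_{\mathcal{G}}[V_i] \;=\; \#\{n\leq x_i : n^2+1\text{ sqfree, } P^+(n^2+1)\in I_i\} \;\asymp\; x_i,
\]
the lower bound using Lemma \ref{posProp} (positive density of $n$ whose $P^+(n^2+1)$ lies in the specific window $I_i$) together with Lemma \ref{Density_SF}; a short second-moment computation concentrates $V_i$ around $\asymp x_i$ with high $\mathcal{G}$-probability. I would then apply Lemma \ref{normApproxLemma} with $V_\ell := f(p_\ell)$ for $p_\ell\in\bigcup_i I_i$, $c(\ell,i) := c_{p_\ell}(x_i)/\sqrt{V_i}\cdot \one_{p_\ell\in I_i}$, $\eta := (\log\log X)^{-1/4}$, and $u := \sqrt{(2-\delta)\log k}$ with $\delta := (\log\log X)^{-1/10}$. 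Using $|c_p(x_i)|\leq 2$ (since $p > x_i$ forces at most two roots of $n^2\equiv -1\pmod p$ in $[1,x_i]$) and $V_i \asymp x_i$, both error terms in that lemma are $o(1)$. The comparison reduces to a multivariate normal $(Z_i)_{i\in\mathcal{S}}$ with zero off-diagonal covariances (by the conditional independence), and Lemma \ref{normComparisonLemma} with $\varepsilon = 0$ gives $\max_{i\in\mathcal{S}} Z_i \geq \sqrt{(2-\delta)\log k}$ with probability $1 - \exp(-\Theta(k^{\delta/20}/\sqrt{\log k})) = 1 - O((\log\log X)^{-1/50})$. Undoing the normalization yields some $N = x_i\in[X, X^{(\log X)^2}]$ with $|\sum_{n\leq N} f(n^2+1)| \geq \sqrt{V_i(2-\delta)\log k} - |R_i| \gg \sqrt{N\log\log X}$, establishing \eqref{local_large_val}.

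\textit{Main obstacle.} The trickiest step will be the conditioning: the naive choice $\mathcal{G} := \sigma(f(q) : q\notin\bigcup_i I_i)$ leaves $c_p(x_i)$ depending on $f(q)$ for some $q\in I_j$ with $j < i$, since such a $q$ can appear as a prime factor of $(n^2+1)/p$ for $p\in I_i$ once $(n^2+1)/p \leq x_i/\log x_i$ reaches into $I_j$. Obtaining the ``truly independent'' assertion promised by the outline therefore requires either a refined choice of the $I_i$ (e.g.\ a narrow ``high-high'' window just below $x_i^2$, so that $(n^2+1)/p$ is too small to meet any $I_j$) or a more subtle sequential/martingale-type conditioning argument. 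A secondary but important input is the variance lower bound $V_i \gg x_i$, which depends on Lemma \ref{posProp} producing a positive density of $n$ with $P^+(n^2+1)$ inside the specific window $I_i$ (rather than merely ``very large''), coupled with the squarefreeness count of Lemma \ref{Density_SF}.
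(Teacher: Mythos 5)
Your proposal correctly follows the Harper--Klurman--Shkredov--Xu strategy at the level of ideas, and you honestly flag the two places where the argument is incomplete; but precisely those two places are where the paper's proof makes structural choices that your set-up cannot reproduce, so what you've written contains genuine gaps rather than merely unwritten routine details.

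The first gap is the conditioning, which you call the ``main obstacle.'' With your decomposition, the coefficients $c_p(x_i)=\sum_{n\leq x_i,\,p\mid n^2+1}f((n^2+1)/p)$ are \emph{not} $\mathcal{G}$-measurable: $(n^2+1)/p$ can have a prime factor in $I_j$ for $j<i$, so $c_p(x_i)$ involves $f(q)$ with $q\in I_j$, which you want to treat as random. Neither of your suggested fixes matches what actually works. The paper's resolution is simpler and different: it builds the measurability into the definition of the main piece, taking
\[
S_{i,1}=\sum_{p\in\mathcal A_i}\sum_{\substack{n\leq x_i,\ p\mid n^2+1\\ q\mid (n^2+1)/p\Rightarrow q\notin\mathcal A}}f(n^2+1),
\]
so that after pulling out $f(p)$ the remaining factor depends only on $f(q)$ with $q\notin\mathcal A$. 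The $n$ you would have included but the paper excludes are shunted into a separate remainder (the ``second sum'') together with all $n$ having any prime factor in $\mathcal A\setminus\mathcal A_i$. With this modification, and because the $\mathcal A_i$ are pairwise disjoint and each prime in $\mathcal A_i$ divides at most two values of $n^2+1$ for $n\le x_i$, the $S_{i,1}$ really are conditionally independent with deterministic coefficients given $\mathcal G$.

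The second gap is that your good index set $\mathcal S$ is built from the events $\{|R_i|\leq\frac{1}{10}\sqrt{x_i\log\log X}\}$, and your $R_i$ depends on $f(p)$ for $p$ in \emph{other} windows $I_j$, hence is not $\mathcal G$-measurable; so $\mathcal S$ cannot be treated as fixed when you subsequently apply Lemma \ref{normApproxLemma} conditionally on $\mathcal G$. The paper avoids this by splitting the remainder into a piece that involves $f(p)$ for $p\in\mathcal A$ (controlled \emph{unconditionally}, via second moment plus a union bound over all $k$ scales, which is where the $1/\log X$ saving from the widely spread scales $x_i=X^{i(\log 3i)^2}$ is essential --- your multiplicatively close scales $x_{i+1}\approx Ax_i\log x_i$ do not give $\sum_{j<i}\sum_{p\in\mathcal A_j}1/p\ll 1/\log X$), and a piece involving only $f(q)$ for $q\notin\mathcal A$ (which \emph{is} $\mathcal G$-measurable and hence legitimately used to select $\mathcal S$ after conditioning). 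Finally, you are right to worry about the variance lower bound: Lemma \ref{posProp} gives $P^+(n^2+1)\gg n\log n$, i.e.\ anywhere in $[cn\log n,\,n^2+1]$, not in your narrow window $I_i$; the paper sidesteps this entirely by letting $\mathcal A_i$ contain essentially all primes $\gg x_i\log x_i$ dividing $n^2+1$ for $n\leq x_i$ (with smaller-scale primes removed for disjointness), so that Lemma \ref{posProp} applies directly and $|\mathcal A_i|\asymp x_i$ comes for free.
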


The fact that Lemma \ref{large_values_lemma} implies Theorem \ref{large_values_theorem} is trivial, but we provide a proof for convenience to the reader. 

\begin{proof} [Proof that Lemma \ref{large_values_lemma} implies Theorem \ref{large_values_theorem}]
    To begin, note that \eqref{local_large_val} fails with probability $\ll (\log\log X)^{-1/50}$. Summing these probabilities over a suitably sparse sequence of values of $X$ yields a convergent series. By the Borel-Cantelli lemma, the probability that \eqref{local_large_val} fails for infinitely many of the chosen values of $X$ is $0$. Hence, there almost surely exist arbitrarily large values of $N$ for which $\frac{1}{\sqrt{N}}\Big|\sum_{n\leq N} f(n^2 + 1)\Big|\gg \sqrt{\log\log N}$, noting that $\log\log N\asymp \log \log X$ for $N\in[X, X^{(\log X)^2}$]. 
\end{proof}

The rest of this section is devoted to proving Lemma \ref{large_values_lemma}.

\subsection{Scales and conditioning}

Let $X$ be large and let $x_1, x_2, \ldots, x_k\in [X, X^{(\log X)^2}]$ be defined by $x_i = X^{i(\log 3i)^2}$ for $i=1, 2, \ldots, k$, where $k = \lfloor \log X\rfloor$. Define $\mathcal E_{x} = \{p\gg x\log x : p \mid n^2 + 1 \text{ for some } n\leq x\}$. Further define $\mathcal A_i = \mathcal E_{x_i} \setminus \{p\gg x_i\log x_i : p\mid n^2 + 1 \text{ for some } n\leq x_{i-1}\}$ (this is to ensure there is no interaction between the different scales). Define $\mathcal A = \mathcal A_X = \bigcup_{i=1}^k \mathcal A_i$. We now prove some basic properties about the sets $\mathcal A_i$. 

\begin{lem}[Properties of the sets $\mathcal A_i$] Let $X$  be large enough. We have that \begin{itemize}
    \item[(i)] the intersections $\mathcal A_i\cap \mathcal A_j$ are empty for all $i\neq j$,
    \item[(ii)] each $\mathcal A_i$ is of size $\asymp x_i$,
    \item[(iii)] no two distinct primes in $\mathcal A_i$ both divide $n^2 + 1$ for some $n\leq x_i$. 
\end{itemize} \label{AsetProperties}
\end{lem}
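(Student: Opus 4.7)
The plan is to verify the three properties separately, exploiting the size constraint $p \gg x_i \log x_i$ built into $\mathcal{E}_{x_i}$ together with the super-polynomial growth of $i \mapsto x_i$.

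For part (i), I would argue by contradiction: suppose $p \in \mathcal{A}_i \cap \mathcal{A}_j$ with $i < j$. From $p \in \mathcal{A}_j \subseteq \mathcal{E}_{x_j}$ we get $p \gg x_j \log x_j$, and from $p \in \mathcal{A}_i \subseteq \mathcal{E}_{x_i}$ there exists $n \leq x_i \leq x_{j-1}$ with $p \mid n^2 + 1$. But then $p$ lies in precisely the set removed from $\mathcal{E}_{x_j}$ in the definition of $\mathcal{A}_j$, contradicting $p \in \mathcal{A}_j$. For part (iii), if $p_1 \neq p_2$ both lie in $\mathcal{A}_i$ and both divide some $n^2 + 1$ with $n \leq x_i$, then $p_1 p_2 \mid n^2 + 1 \leq x_i^2 + 1$; but $p_1 p_2 \gg (x_i \log x_i)^2$, which exceeds $x_i^2 + 1$ for $X$ large enough.

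For part (ii), the upper bound $|\mathcal{A}_i| \leq |\mathcal{E}_{x_i}| \ll x_i$ is immediate from Lemma \ref{posProp}, which also supplies the matching bound $|\mathcal{E}_{x_i}| \gg x_i$. To deduce the same lower bound on $|\mathcal{A}_i|$, I would show that the subtracted set $\{p \gg x_i \log x_i : p \mid n^2 + 1 \text{ for some } n \leq x_{i-1}\}$ is $o(x_i)$. For each $n \leq x_{i-1}$, the number of prime divisors of $n^2 + 1$ exceeding $x_i \log x_i$ is at most $\log(n^2 + 1)/\log(x_i \log x_i) = O(1)$, so the subtracted set has size $\ll x_{i-1}$. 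Writing $f(i) := i(\log 3i)^2$, differentiation gives $f(i) - f(i-1) \geq 1$, hence $x_{i-1}/x_i \leq X^{-1}$ uniformly in $i$; therefore $|\mathcal{A}_i| \geq |\mathcal{E}_{x_i}| - O(x_{i-1}) \asymp x_i$ once $X$ is sufficiently large.

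I expect part (ii) to be the main obstacle: it relies on the nontrivial input of Lemma \ref{posProp} (a positive proportion of $n \leq x$ have $n^2 + 1$ with a prime factor $\gg x \log x$) and requires one to quantify that the ``recycled'' primes inherited from smaller scales cannot consume a positive proportion of $\mathcal{E}_{x_i}$ — this is precisely where the rapid growth of the sequence $(x_i)$ is essential. Parts (i) and (iii) reduce to elementary bookkeeping once one uses the size lower bound $p \gg x_i \log x_i$ baked into the definition of $\mathcal{E}_{x_i}$.
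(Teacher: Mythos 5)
Your proof is correct and follows essentially the same route as the paper: parts (i) and (iii) are elementary bookkeeping with the size constraint $p \gg x_i\log x_i$, and part (ii) combines the positive-proportion input of Lemma~\ref{posProp} with the observation that the subtracted set has size $\ll x_{i-1} = o(x_i)$. Two small remarks: the upper bound $\lvert\mathcal{E}_{x_i}\rvert \ll x_i$ is elementary (each $n^2+1 \leq x_i^2+1$ has at most one prime factor $\gg x_i\log x_i$) rather than a consequence of Lemma~\ref{posProp}; and to extract $\lvert\mathcal{E}_{x_i}\rvert \gg x_i$ from Lemma~\ref{posProp} one should restrict to $n\in[cx_i, x_i]$ (so that $P^+(n^2+1)\gg x_i\log x_i$ rather than just $\gg n\log n$) and note that the map $n\mapsto P^+(n^2+1)$ is at most two-to-one, both of which the paper makes explicit.
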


Properties (i) and (iii) are immediate. Property (ii) is a straightforward consequence of the following lemma. 

\begin{lem}\label{posProp}
    There exist a positive proportion of positive integers $n$ such that $n^2 + 1$ is square-free and $P^+(n^2 + 1)\gg n\log n$ for some suitably small implied absolute constant.
\end{lem}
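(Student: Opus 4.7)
The plan is to first establish that a positive proportion of $n\leq N$ satisfies $P^+(n^2+1)\geq cN\log N$ (which forces $P^+(n^2+1)\geq cn\log n$, since $n\leq N$), and then to intersect with the squarefree condition provided by Lemma \ref{Density_SF}. The starting point is the standard identity
\[
\sum_{n\leq N}\log(n^2+1)
=
\sum_p\log p\sum_{a\geq 1}\#\{n\leq N:p^a\mid n^2+1\},
\]
whose left-hand side equals $2N\log N + O(N)$.

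I would evaluate the right-hand side piece by piece. The primes $p\equiv 3\pmod 4$ contribute $0$ (as $-1$ is not a quadratic residue), $p=2$ contributes $O(N)$ (since $v_2(n^2+1)\leq 1$), and higher prime powers $p^a$ with $a\geq 2$ contribute $O(N)$ by a standard estimate (the tail $\sum_{p,a\geq 2}\log p/p^a$ converges, and the $O(1)$ error sums to $O(\theta(N))=O(N)$). For $p\equiv 1\pmod 4$ with $p\leq N$ and $a=1$, the count $\#\{n\leq N:p\mid n^2+1\}=2\lfloor N/p\rfloor+O(1)$ combined with Mertens' theorem for arithmetic progressions gives $N\log N + O(N)$. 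Balancing against $2N\log N + O(N)$ forces the contribution from primes $p>N$ with $p\equiv 1\pmod 4$ to equal $N\log N + O(N)$.

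The heart of the argument is pruning the \emph{medium} primes. Primes $p\in(N,cN\log N]$ contribute at most $2\theta(cN\log N)\sim 2cN\log N$ by the prime number theorem, so primes $p>cN\log N$ contribute at least $(1-2c)N\log N + o(N\log N)$. Since $p\mid n^2+1\leq N^2+1$ gives $\log p\leq 2\log N + O(1)$, and since no $n\leq N$ can have two distinct prime factors exceeding $cN\log N$ in $n^2+1$ (their product would exceed $N^2+1$ for $N$ large), the number of distinct $n$ appearing is at least $(1-2c)N/2 + o(N)$. For each such $n$, $P^+(n^2+1)\geq cN\log N\geq cn\log n$.

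Finally, Lemma \ref{Density_SF} gives $\#\{n\leq N:n^2+1\text{ squarefree}\}=\kappa N+O(N^{2/3}\log N)$ with $\kappa=\prod_{p\equiv 1\pmod 4}(1-2/p^2)$, and the elementary inequality $\kappa\geq 1-2\sum_{p\equiv 1\pmod 4}1/p^2\geq 1-\pi^2/48>3/4$ suffices. By inclusion-exclusion, at least $(\kappa+(1-2c)/2-1)N+o(N)$ integers $n\leq N$ satisfy both $n^2+1$ squarefree and $P^+(n^2+1)\geq cn\log n$, which is a positive proportion for $c>0$ small enough. The main subtlety is ensuring the Mertens-type computation is precise enough that the $N\log N$ terms cancel on the nose, and verifying $\kappa>1/2$ numerically so that the final inclusion-exclusion is not vacuous.
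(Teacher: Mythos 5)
Your argument is correct and takes a genuinely different, more elementary route than the paper. The paper cites Maynard--Rudnick for the statement that a proportion at least $1/2$ of $n$ satisfy $P^+(n^2+1)>n$, intersects this with the squarefree density $\kappa\approx 0.894$, and then runs a separate bootstrap (Granville's pigeonhole argument, as presented in Maynard--Rudnick) to upgrade $n$ to $n\log n$. You instead carry out the Chebyshev first-moment identity $\sum_{n\leq N}\log(n^2+1)=\sum_{p}\log p\sum_{a\geq 1}\#\{n\leq N:p^a\mid n^2+1\}$ from scratch and fold the pruning of the medium primes $p\in(N, cN\log N]$ (which cost at most $2\theta(cN\log N)\sim 2cN\log N$ since each such prime divides $n^2+1$ for at most two $n\leq N$) directly into the computation, obtaining in one pass that at least $((1-2c)/2+o(1))N$ values of $n\leq N$ satisfy $P^+(n^2+1)>cN\log N\geq cn\log n$. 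This is self-contained and renders the paper's two-stage route unnecessary; the density-$\geq 1/2$ input is essentially your argument as $c\to 0$, and your pruning replaces the bootstrap. The inclusion-exclusion with $\kappa>3/4$ and $c$ small is also fine. The one blemish is cosmetic: the bound $\sum_{p\equiv 1\,(4)}p^{-2}\leq \pi^2/48$ is stated without derivation; the sum is actually below $0.08$ (e.g.\ $\sum_{m\geq 1}(4m+1)^{-2}\leq\int_0^{\infty}(4t+1)^{-2}\,\dd t=1/4$, and sharper elementary bounds are immediate), so $\kappa>3/4$ does hold, but you should replace the unexplained constant with a bound you actually justify. It is also worth recording explicitly that for $p>N$ one has $p^2>(N+1)^2>n^2+1$, so only $a=1$ contributes for large primes and each $n$ with a prime factor exceeding $cN\log N$ contributes at most $2\log N+O(1)$ to the weighted count, which is the step that converts the $N\log N$ lower bound into $(1-2c)N/2$ values of $n$.
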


\begin{proof}
    In \cite{maynard2021a} it is shown that the density of $n$ such that $P^+(n^2 + 1)>n$ is at least $1/2$ (see also the improvements in \cite{JTNB_2020__32_3_891_0}). The density of $n$ such that $n^2 + 1$ is squarefree is $\prod_{p\equiv 1\pmod 4} (1-2/p^2)\approx 0.894$. It follows that there is a positive proportion of $n$ such that $n^2 + 1$ is square-free and $P^+(n^2 + 1) > n$, because the densities add up to more than $1$ and thus the corresponding sets cannot be disjoint. Let the set of such $n\in[N/\log N, N]$ be called $\mathcal N$.
   
   Following the bootstrapping argument in \cite{maynard2021a} due to Granville, we show that there is a positive proportion of $n$ such that $n^2 + 1$ is square-free as well as $P^+(n^2 + 1)\gg n\log n$. Indeed, fix $\delta>0$ and consider the set 
   \begin{equation}\mathcal S = \left\{n\in \left[\frac{N}{\log N}, N\right] : P^+(n^2 + 1) < \delta n\log n \text{ and } n^2 + 1 \text{ square-free}\right\}.\end{equation} Assume that $\mathcal S$ has full density in the set of $n$ in the same interval for which $n^2 + 1$ is square-free. Intersecting $\mathcal S$ with the set $\mathcal N$ gives that $\lvert \mathcal N\cap\mathcal S\rvert\geq cN$ for some absolute constant $c>0$ and $N$ large enough. But now the set of largest prime factors of $n\in\mathcal N\cap\mathcal S$ is contained in $[N/\log N, \delta N\log N]$ by the assumption on $\mathcal S$. This set is also of size at least $cN/2$, since each such prime factor $p$ divides at most two values of $n^2 + 1$. This gives $c/2 < \delta$ which is not true for $\delta$ chosen suitably small. It follows that the set of $n\in [N/\log N, N]$ such that $P^+(n^2 + 1)\gg n\log n$ and $n^2 + 1$ is square-free, for some appropriately chosen implicit constant, is of positive proportion.
\end{proof}

\begin{rem}\label{rem_quadr}
    The above proof is the only ingredient that does not directly work for general quadratic polynomials (i.e., those whose density of square-free values is smaller than $1/2$). However, one would be able to run the proof of Maynard-Rudnick \cite{maynard2021a} from the very beginning to get what is needed in the general case. 
\end{rem}

Property (ii) is a localization argument applied to Lemma \ref{posProp} and follows from the fact that a given prime $p\in\mathcal A_i$ divides at most two values of $n^2 + 1$ for $n\leq x_i$ (so we lose at most a factor of $2$ in the ``positive proportion'' statements). More precisely, by Lemma \ref{posProp}, we have that there are $\gg x$ values of $n\in[cx, x]$ such that $n^2 + 1$ is square-free and $P^+(n)\gg (cx)\log(cx)\gg x\log x$, as long as $c>0$ is a small enough absolute constant. Considering the set of the largest prime factors of $n^2 + 1$ for such $n\in[cx_i, x_i]$ and noting that the set we remove from $\mathcal E_{x_i}$ in the definition of $\mathcal A_i$ is of size $\ll x_{i-1} = o(x_i)$, yields the lower bound in (ii). Noting that the upper bound $\mathcal A_i\ll x_i$ is trivial finishes the proof of property (ii).

We are now in a position to produce large fluctuations; we begin by decomposing our initial sum at scale $x_i$ into the following three sums: \begin{equation}
    \sum_{n\leq x_i} f(n^2 + 1) = \sum_{\substack{p\in\mathcal A_i}}\sum_{\substack{n\leq x_i \\ p\mid n^2 + 1 \\ q\mid (n^2 + 1)/p \implies q\not\in\mathcal A}} f(n^2 + 1) + \sum_{\substack{n\leq x_i \\ p\mid n^2 + 1 \text{ for some } p\in\mathcal A\setminus \mathcal A_i}} f(n^2 + 1) + \sum_{\substack{n\leq x_i \\ p\mid n^2 + 1\implies p\not\in\mathcal A}} f(n^2 + 1) \label{largeVal1}
\end{equation}

Denote the first sum in the above by $S_{i,1}$. We will condition on the values of $f(q)$ for all $q$ not in $\mathcal A$, making the first sum a linear combination of independent random variables. However, we first deal with the second and third sums in the next subsection.

\subsection{Ignoring the second and third sums}\label{2nd3rdsums} Let us first show the second sum can be ignored at all scales due to density reasons. The second moment of the sum (due to $\mathbb E_f f(n^2+1)f(m^2 + 1)\neq 0$ if and only if $n=m$, assuming that $n^2 + 1$ and $m^2 + 1$ are square-free) is equal to the number of $n\leq x_i$ such that $p\mid n^2 + 1$ for some $p\in \bigcup_{j<i} \mathcal A_j$. This is at most \begin{align}
    \ll & \sum_{j=1}^{i-1} \sum_{p\in\mathcal A_j}\left(\frac{x_i}{p} + 1\right) \\ \ll & x_i\sum_{j=1}^{i-1} \left(\sum_{p\in\mathcal A_j} \frac{1}{p} + \lvert\mathcal A_j\rvert\right)
    \\ \ll &  x_i\sum_{j=1}^{i-1} \left(\frac{x_j}{x_j\log x_j} + x_j \right) \\ \ll & \frac{x_i}{\log X}, \label{littleOh}
\end{align}
where the last inequality follows by definition of the sampling points $x_j$ and $\log x_j\gg \log X$. 

Markov's inequality and the union bound implies that the probability that the second sums are larger than $\sqrt x_i(\log \log x_i)^{1/100}$ for every $i=1, 2, \ldots, k$ is at most $O(k/(\log X(\log \log x)^{1/50}))$. This is small enough given that $k = \lfloor\log X\rfloor$. 

Now we deal with the third sums, which crucially do not depend on the values of $f(p)$ for $p\in\mathcal A$. A second moment estimate yields with probability $1-O((\log \log X)^{-1/50})$ a large subset $S$ (i.e. of cardinality $\geq 0.99k$) of indices in $\{1, 2, \ldots, k\}$ such that the third sums have typical behavior at all of the scales in this subset; that is, for every $i\in S$, we have that the probability that the third sum is greater than $\sqrt{x_i}(\log \log x_i)^{1/100}$ is $O(1/(\log \log X)^{1/50})$. By the union bound, the number of indices for which the third sum is larger than $\sqrt{x_i}(\log\log x_i)^{1/100}$ is at most $O(k/(\log\log X)^{1/50})$, from where the desired conclusion follows. Note that this set $S$ only depends on the values of $f(q)$ for $q\not\in\mathcal A$, and thus may be viewed as ``fixed'' upon conditioning.

\subsection{Creating large fluctuations} 

First note that \eqref{littleOh} along with Lemma \ref{AsetProperties} also shows that the first sum $S_{i,1}$ from \eqref{largeVal1} contains a good chunk of the entire sum. 

\begin{lem}\label{goodChunk}
    There exist $\gg x_i$ values of $n\leq x_i$ such that $n^2 + 1$ is square-free as well as $p\mid n^2 + 1$ for some $p\in\mathcal A_i$, but $(n^2 + 1)/p$ has no prime factors in $\mathcal A$. 
\end{lem}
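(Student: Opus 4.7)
The plan is to derive the lemma from Lemma \ref{posProp} by showing that, for most $n$ whose $n^2+1$ has an unusually large prime factor, that prime automatically lies in $\mathcal{A}_i$ while the complementary divisor stays clear of all of $\mathcal{A}$.

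Concretely, I would first apply Lemma \ref{posProp}, restricted to the subinterval $[cx_i, x_i]$ as is already done in the proof of property (ii) of Lemma \ref{AsetProperties}, to produce $\gg x_i$ values of $n$ with $n^2+1$ squarefree and $p := P^+(n^2+1)\gg x_i\log x_i$. Such $p$ automatically lies in $\mathcal{E}_{x_i}$, and to upgrade this to $p\in\mathcal{A}_i$ I would discard those $n$ for which $p$ also divides $m^2+1$ for some $m\leq x_{i-1}$. Since each $m^2+1\leq x_{i-1}^2+1$ has at most $O(\log x_{i-1}/\log(x_i\log x_i)) = O(1)$ prime factors exceeding $x_i\log x_i$, there are at most $O(x_{i-1})$ such excluded primes; each contributes $O(x_i/p + 1) = O(1)$ values of $n$, giving $O(x_{i-1}) = o(x_i)$ bad $n$ overall. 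The estimate $x_{i-1} = o(x_i)$ comes directly from the explicit form $x_i = X^{i(\log 3i)^2}$, since the exponent difference $i(\log 3i)^2 - (i-1)(\log 3(i-1))^2\to\infty$.

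Next, I would argue that, after discarding a further $o(x_i)$ values, $(n^2+1)/p$ has no prime factor in $\mathcal{A} = \bigcup_j\mathcal{A}_j$, which I handle by cases in $j$. For $j = i$, Lemma \ref{AsetProperties}(iii) directly forbids a second prime of $\mathcal{A}_i$ from dividing $n^2+1$. For $j > i$, the defining removal in $\mathcal{A}_j$ already excludes every prime that divides $m^2+1$ for some $m\leq x_{j-1}\geq x_i$, so no prime of $\mathcal{A}_j$ can divide $n^2+1$. For $j < i$, divisor-counting gives that each $q\in\mathcal{A}_j$ contributes $O(x_i/q + 1)$ bad $n\leq x_i$; using $\lvert\mathcal{A}_j\rvert\ll x_j$ and $q\gg x_j\log x_j$, summing over $q$ and then $j$ yields a total of $\ll x_i\sum_{j<i}1/\log x_j + \sum_{j<i}x_j \ll x_i/\log X + x_{i-1}$, where the first sum converges by $\sum_j 1/(j(\log 3j)^2) < \infty$. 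Both summands are $o(x_i)$.

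Combining the two steps, after subtracting $o(x_i)$ bad values from an initial pool of $\gg x_i$, we are left with $\gg x_i$ values of $n$ satisfying all conditions in the lemma. The main obstacle is purely a bookkeeping matter: one has to verify that the scales $x_i$ grow quickly enough (as quantified above) for the various exceptional sets to be genuinely negligible. Everything else reduces to standard divisor bounds together with the input from Lemma \ref{posProp} and property (iii) of Lemma \ref{AsetProperties}.
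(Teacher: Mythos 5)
Your proof is correct and takes essentially the same route as the paper's (two-sentence) argument: start from the $\gg x_i$ values already established in the discussion of Lemma \ref{AsetProperties}(ii), then remove the $o(x_i)$ values whose cofactor meets $\mathcal A$ via the estimate \eqref{littleOh}. Your version is in fact a bit more careful than the paper's, since you explicitly note that the cases $j=i$ and $j>i$ are handled by property (iii) and the definitional removal in $\mathcal A_j$ respectively, whereas the paper silently subsumes them under a citation of \eqref{littleOh}, which strictly speaking only treats $j<i$.
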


\begin{proof}
We have already established that there are $\gg x_i$ values of $n\leq x_i$ such that $n^2 + 1$ is square-free and $p\mid n^2 + 1$ for some $p\in\mathcal A_i$. The number of those $n\leq x_i$ for which there is an additional prime divisor of $n^2 + 1$ in $\mathcal A$ is $o(x_i)$ (as $X\to\infty$) by \eqref{littleOh}. 
\end{proof}

Next we condition on the values of $f(q)$ for all of the primes $q$ not in $\mathcal A$. For any realizations of the latter for which a suitable set $S$ as in the conclusion of the previous subsection exists (which happens with probability $1-O(1/(\log \log X)^{1/50})$), we pass from the sums $S_{i,1}$ with $i\in S$ to their normal approximations.

\begin{lem}
We have 
\begin{equation}\label{normApprox}
    \widetilde{\mathbb P}\left(\max_{i\in S} \frac{S_{i,1}}{\sqrt{x_i}}\leq u \right)\leq \mathbb P\left(\max_{1\leq i\leq k} Z_i\leq u + \eta\right) + O\left(\frac{\log X}{\eta^3 X^{1/2}}\right), 
\end{equation}
where $\eta>0$ is small and the $Z_i$ are jointly normal random variables with the same (conditional) means and covariances as the (appropriately normalized) $S_{i,1}$. $\widetilde{\mathbb P}$ denotes conditional probability, conditioned on the values of $f(q)$ for all the primes $q\not\in\mathcal A$.  
\end{lem}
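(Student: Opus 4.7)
The plan is to expose the linear structure of $S_{i,1}$ by conditioning on the $\sigma$-algebra generated by $\{f(q) : q \notin \mathcal A\}$, and then to apply the normal approximation result, Lemma \ref{normApproxLemma}. Since each $p \in \mathcal A_i$ satisfies $p \gg x_i \log x_i$ (so in particular $p^2 \nmid n^2 + 1$ for any $n \leq x_i$), the multiplicativity of $f$ on squarefree integers gives $f(n^2 + 1) = f(p)\, f((n^2 + 1)/p)$ whenever $p \mid n^2 + 1$. The inner summation condition in the definition of $S_{i,1}$ forces every prime divisor of $(n^2 + 1)/p$ to lie outside $\mathcal A$, so upon conditioning we may write
\[
S_{i,1} = \sum_{p \in \mathcal A_i} f(p)\, c_p(x_i), \qquad c_p(x_i) := \sum_{\substack{n \leq x_i\\ p \mid n^2 + 1\\ q \mid (n^2+1)/p \,\Rightarrow\, q \notin \mathcal A}} f\bigl((n^2 + 1)/p\bigr),
\]
where each $c_p(x_i)$ is now a deterministic coefficient and the family $(f(p))_{p \in \mathcal A}$ remains jointly independent $\pm 1$.

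I would then invoke Lemma \ref{normApproxLemma} with $\ell$ ranging over $\mathcal A$, $V_\ell := f(p)$, and $c(\ell, i) := c_p(x_i)/\sqrt{x_i}$ if $p \in \mathcal A_i$ and $0$ otherwise, so that $Y_i = S_{i,1}/\sqrt{x_i}$ for $i \in S$. The set $S$ is measurable with respect to the conditioning $\sigma$-algebra by its construction in the previous subsection, and so may be treated as deterministic here. The disjointness of the $\mathcal A_i$ (Lemma \ref{AsetProperties}(i)) yields $c(\ell, i) c(\ell, j) = 0$ whenever $i \neq j$, which makes the resulting Gaussians $Z_i$ truly independent and kills the off-diagonal contributions in the first error expression of Lemma \ref{normApproxLemma}.

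The remaining task is then to bound the two error terms by $O(\log X/(\eta^3 X^{1/2}))$, and the crude estimate $|c_p(x_i)| \leq \#\{n \leq x_i : p \mid n^2 + 1\} \leq 2(x_i/p + 1) \ll 1$, valid uniformly for $p \in \mathcal A_i$ since $p \gg x_i \log x_i$, is the workhorse. Combined with $|\mathcal A_i| \asymp x_i$ from Lemma \ref{AsetProperties}(ii), the first error term becomes $\ll \sum_{i \in S} \sqrt{|\mathcal A_i|/x_i^2} \ll \sum_i 1/\sqrt{x_i}$, and the second reduces to $\ll \sum_i |\mathcal A_i|/x_i^{3/2} \ll \sum_i 1/\sqrt{x_i}$; since each $x_i \geq X$ and $|S| \leq k = \lfloor \log X\rfloor$, both are $\ll \log X/\sqrt X$, yielding the claimed bound (the second, cubic term dominates once $\eta$ is small). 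The main obstacle is really the bookkeeping of the conditioning -- in particular, checking that each $c_p(x_i)$ genuinely depends only on conditioned values, and that no $f(p)$ with $p \in \mathcal A$ is simultaneously ``random'' at one scale and ``conditioned'' at another. The former is guaranteed by the exclusion $q \notin \mathcal A$ in the definition of $c_p(x_i)$, while the latter is exactly what the refinement from $\mathcal E_{x_i}$ to $\mathcal A_i$ (removing primes that could appear from $n \leq x_{i-1}$) was designed to enforce.
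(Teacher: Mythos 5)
Your proposal is correct and follows essentially the same route as the paper: condition on $f(q)$ for $q\notin\mathcal A$ to expose $S_{i,1}$ as a linear form in the independent $f(p)$ with deterministic coefficients $c_p(x_i)$, invoke Lemma \ref{normApproxLemma} with the disjointness of the $\mathcal A_i$ killing the off-diagonal terms, and close using the pointwise bound $|c_p(x_i)|\ll 1$ together with $|\mathcal A_i|\ll x_i$. Your intermediate estimate $\sqrt{|\mathcal A_i|/x_i^2}\ll 1/\sqrt{x_i}$ is in fact slightly more carefully written than the paper's displayed bound, which has a minor typo but reaches the same final error $O(\log X/(\eta^3 X^{1/2}))$.
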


\begin{proof} This follows from Lemma \ref{normApproxLemma} applied to the first sums $S_{i,1}$ with $i\in S$, upon bounding the error term. The random variables $V_i$ are the $f(p)$ for $p\in\mathcal A$, together with the corresponding coefficients coming from \eqref{largeVal1} (after pulling out the $f(p)$). 

Due to $\mathcal A_i\cap \mathcal A_j = \emptyset$ unless $i=j$, the error terms simplify greatly to $$\frac{1}{\eta^{2} } \sum_{i\in S} \sqrt{ \sum_{p\in \mathcal A_i}  \E \left\lvert\frac{1}{\sqrt{x_i}} \sum_{\substack{n\leq x_i \\ p\mid n^2 + 1 \\ q\mid (n^2 + 1)/p \implies q\not\in\mathcal A}} f(n^2+1)\right\rvert^{4} }  + \frac{1}{\eta^{3}}
    \sum_{i\in S}
    \sum_{p \in  \mathcal A_i} \E  \left\lvert \frac{1}{\sqrt{
    x_i}} \sum_{\substack{n\leq x_i \\ p\mid n^2 + 1 \\ q\mid (n^2 + 1)/p \implies q\not\in\mathcal A}} f(n^2+1) \right\rvert^{3}.$$ Now simply using the point-wise bound $\ll 1$ on the sums over $n$ (since given $p\in\mathcal A_i$, there are at most $2$ values of $n\leq x_i$ such that $p\mid n^2 + 1$) gives that this is 
    \begin{equation}
        \ll \frac{1}{\eta ^2} \sum_{i\in S}\sum_{p\in\mathcal A_i} \frac{1}{x_i} + \frac{1}{\eta^3}\sum_{i\in S}\sum_{p\in\mathcal A_i} \frac{1}{x_i^{3/2}} \ll \frac{\lvert S\rvert}{\eta^2 X} + \frac{\lvert S\rvert }{\eta^3 X^{1/2}} \ll \frac{\log X}{\eta^3 X^{1/2}}.
    \end{equation}
In the last inequality we used the bound $\lvert \mathcal A_i\rvert\ll x_i$ (recall Lemma \ref{AsetProperties}) and that $\lvert S\rvert\leq\log X$.  
\end{proof}

We now apply Lemma \ref{normComparisonLemma} to create large fluctuations for at least one of the first sums $S_{i,1}$. First, we calculate the expected variance of $Z_i$, which is equal to the expectation of $\mathbb EZ_i^2$ over all realizations of $f(q)$ for $q\not\in\mathcal A$. We denote conditional expectation by $\widetilde{\mathbb E}$, conditioned on the values of $f(q)$ for $q\not\in\mathcal A$, and recall that $\mathbb EZ_i^2$ is equal to the (appropriately normalized) conditional variance of $S_{i, 1}$.  We have
\begin{align}
   \beta_i := \mathbb E\mathbb EZ_i^2  & = \frac{1}{x_i}\mathbb E\widetilde{\mathbb E} S_{i,1}^2 = \frac{1}{x_i}\mathbb E \sum_{\substack{n, m\leq x_i \\ p\mid \gcd(n^2 + 1, m^2 + 1) \text{ for some } p\in \mathcal A_i \\ q\mid (n^2 + 1)/p\implies q\not\in\mathcal A \\ q\mid (m^2 + 1)/p\implies q\not\in\mathcal A}} f(n^2+1)f(m^2 + 1) \\ & = \frac{1}{x_i}\sum_{\substack{n\leq x_i \\ p\mid n^2 + 1 \text{ for some } p\in\mathcal A_i \\ q\mid (n^2 + 1)/p \implies q\not\in \mathcal A}} \mu^2(n^2 + 1)
\gg 1 \end{align} by Lemma \ref{goodChunk} (we used $\mathbb Ef(m^2 + 1)f(n^2 + 1) \neq 0$ (and in fact equals $1$) if and only if $m=n$). 

Further, we calculate the variance of the variance of $Z_i$ over all realizations of $f(q)$ for $q\not\in\mathcal A$. We have 
\begin{align}
    \mathbb E(\mathbb EZ_i^2)^2 - \beta_i^2 & =  \frac{1}{x_i^2} \mathbb E\left(\sum_{\substack{n, m\leq x_i \\ p\mid \gcd(n^2 + 1, m^2 + 1) \text{ for some } p\in \mathcal A_i \\ q\mid (n^2 + 1)/p\implies q\not\in\mathcal A \\ q\mid (m^2 + 1)/p\implies q\not\in\mathcal A}} f(n^2+1)f(m^2 + 1)\right)^2 - \beta_i^2\\ & = \frac{1}{x_i^2} \mathbb E \sum_{\substack{n_1, m_1, n_2, m_2\leq x_i \\ P^+(n_j^2 + 1) = P^+(m_j^2 + 1) =: p_j\in\mathcal A_i \,\forall j=1, 2\\ q\mid (n_j^2 + 1)/p_j \implies q\not\in\mathcal A \\ q\mid (m_j^2 + 1)/p_j \implies q\not\in\mathcal A}} f(m_1^2 + 1)f(n_1^2 + 1)f(m_2^2 + 1)f(n_2^2 + 1)  - \beta_i^2 \\ & = \frac{1}{x_i^2} \#\left\{m_j, n_j\leq x_i : P^+(n_j^2 + 1) = P^+(m_j^2 + 1) \,\forall j=1, 2, \prod_{j=1}^2 (m_j^2+1)(n_j^2+1) = \square\right\}  - \beta_i^2. \label{varOfVar}
\end{align}
In \eqref{varOfVar} we require that $m_j^2 + 1$ and $n_j^2 + 1$ are square-free for $j=1, 2$. If $P^+(n_1^2+1) = P^+(n_2^2+1)$ (which forces all of the greatest prime factors to be equal), the number of such $m_j, n_j \, (j=1, 2)$ is $\ll 1$. Otherwise, by Proposition \ref{counting_squares}, we have a power-saving bound for \eqref{varOfVar} which counts non-diagonal solutions to $\prod_{j=1}^2 (m_j^2+1)(n_j^2 + 1) = \square$. It follows by Markov's inequality and the union bound over all $i\in S$ that with probability $1-O(X^{-\delta})$ we have $\min_{i\in S} \mathbb EZ_i^2 \geq m$ for some $m>0$. Now we have 
\begin{equation}
    \mathbb P\left(\max_{i\in S} Z_i\leq u + \eta \right) = \mathbb P\left(\max_{i\in S} \frac{Z_i}{\sqrt{\mathbb EZ_i^2}}\leq \frac{u + \eta}{\sqrt{\min_{i\in S}\mathbb EZ_i^2}} \right).
\end{equation}
Applying Lemma \ref{normComparisonLemma} (note that $\mathbb EZ_iZ_j = 0$ since $\mathcal A_i\cap \mathcal A_j = \emptyset$ for $i\neq j$) with $u=\sqrt{m\log\log X}$, $\delta = 1/100$, $\varepsilon = 1/X$ and $\eta$ a fixed constant, we get 
\begin{equation}
    \mathbb P\left(\max_{i\in S} \frac{Z_i}{\sqrt{\mathbb EZ_i^2}}\leq \frac{u + \eta}{\sqrt{\mathbb EZ_i^2}} \right)\leq \mathbb P\left(\max_{i\in S} \frac{Z_i}{\sqrt{\mathbb EZ_i^2}}\leq \frac{\sqrt{m\log\log X} + \eta}{\sqrt{m}} \right)\leq e^{-\Theta((\log X)^{1/3000})}. \label{finalEq}
\end{equation}

It follows by \eqref{normApprox} that the maximum of the $S_{i,1}$ for $i\in S$ gets larger than $\sqrt{x_i\log\log x_i}$ with probability $1- e^{-\Theta((\log X)^{1/3000})}$. Combining this with the existence of a suitable set $S$ with probability $1-O((\log\log X)^{-1/50})$ over the realizations of $f(q)$ for $q\not\in\mathcal A$ (recall the conclusion of subsection \ref{2nd3rdsums}) concludes the proof of Lemma \ref{large_values_lemma}.

\section{Acknowledgements}

The authors thank: Jonathan Bober and Oleksiy Klurman for many helpful conversations throughout the research phase of this project and for their comments on the final draft of this paper; Tim Browning for a fruitful discussion leading to an improvement in Lemma \ref{New_BP}; Matthew Bisatt, Andrew Granville, Ross Paterson, and Per Salberger for helpful discussions concerning integral points on families of varieties; and the anonymous referee for their diligence in reviewing this manuscript. Much of this work was supported by the Swedish Research Council, grant no. 2021-06594, while the authors were in residence at Institut Mittag-Leffler in Djursholm, Sweden, during the winter semester of 2024; the authors thank the institute and, in particular, the organizers for excellent working conditions. The first author is supported by an NSERC postdoctoral fellowship, application no. PDF – 567986 – 2022; the second author is funded by a University of Bristol PhD scholarship.

\bibliographystyle{alpha}
\bibliography{ref}

\textsc{School of Mathematics, University of Bristol, Woodland Road, Bristol BS8 1UG, UK}

\textit{Email address:} \href{mailto:jake.chinis@bristol.ac.uk}{{\texttt{jake.chinis@bristol.ac.uk}}}\\
\indent\textit{Email address:} \href{mailto:besfort.shala@bristol.ac.uk}{{\texttt{besfort.shala@bristol.ac.uk}}}

\end{document}